\documentclass[11pt]{amsart}

\usepackage{amsmath, amsthm, amssymb}
\usepackage{color}
\usepackage{geometry}

\renewcommand{\d}{\partial}
\newcommand{\ddbar}{\sqrt{-1}\d\overline{\d}}

\newcommand{\Ric}{\mathrm{Ric}}

\newcommand{\tr}[2]{\mathrm{tr}_{#1}{#2}}

\newcommand*\Laplace{\Delta}

\numberwithin{equation}{section}

\def\CC{\mathbb{C}}
\def\RR{\mathbb{R}}
\def\P{\mathbb{P}}

\newlength{\ten}
\settowidth{\ten}{\normalfont 1234567890}
\geometry{textwidth=8\ten,centering}
\newlength{\four}
\settowidth{\four}{\normalfont 1234}
\setlength\parindent{\four}

\newtheorem{thm}{Theorem}[section]
\newtheorem{prop}[thm]{Proposition}
\newtheorem{lem}[thm]{Lemma}
\newtheorem{cor}[thm]{Corollary}

\theoremstyle{definition}

\newtheorem{remark}[thm]{Remark}

\renewcommand{\leq}{\leqslant}
\renewcommand{\geq}{\geqslant}
\renewcommand{\epsilon}{\varepsilon}

\begin{document}
\title{A Scalar Curvature Bound Along the Conical K\"ahler-Ricci Flow}
\author{Gregory Edwards}

\begin{abstract}
Starting with a model conical K\"ahler metric, we prove a uniform scalar curvature bound for solutions to the conical K\"ahler-Ricci flow assuming a semi-ampleness type condition on the twisted canonical bundle. In the proof, we also establish uniform estimates for the potentials and their time derivatives.
\end{abstract}

\maketitle

\section{Introduction}

The K\"ahler-Ricci flow has received a considerable amount of attention over the past decades following Cao's parabolic proof of Yau's Theorem \cite{cao,yau} and has since become an essential tool in geometric analysis. The flow has been investigated extensively and understanding its behavior has become a central object of study in K\"ahler geometry (see for instance \cite{PSSW09,PS06,SesTi,SoTi09,SoTi07,SoTi12,TiZha13,TiZhu07} and the references therein).

Recently, the problem of finding K\"ahler-Einstein metrics with cone singularities along a divisor has become an enticing problem for geometers. Conical K\"ahler-Einstein metrics have been around for some time, yet recently conical K\"ahler metrics have generated renewed interest \cite{berman,brendle,CGP,CDS1,CDS2,CDS3,Do11,GP,JMR,LiSun,Ru14,SoWa12}.

The conical K\"ahler-Ricci flow was introduced as a parabolic flow which preserves conical singularities. Chen-Wang showed short time existence of the flow \cite{ChenWang1} and proved that the solution remains conical as long as the flow exists. Later Shen \cite{Shen} showed the flow exists on a maximal time interval which is determined by cohomology. Liu-Zhang studied the conical K\"ahler-Ricci flow on Fano manifolds where they proved that the flow converges to a conical K\"ahler-Einstein metric, if one exists \cite{LiZh}. The conical flow has also been extensively studied on Riemann surfaces \cite{MRS,PSSWa15,PSSWa14}.

Of particular importance to the study of K\"ahler-Ricci flow is the connection between the long time behavior of the flow and the canonical bundle of the underlying manifold. Tsuji \cite{Ts} and Tian-Zhang \cite{TiZh} showed that on a minimal model of general type the K\"ahler-Ricci flow converges to a unique singular K\"ahler-Einstein metric outside of a codimension one subvariety. Zhang then established that in such a setting the scalar curvature actually remains uniformly bounded along the flow \cite{Zh}. This was generalized in the work of Song-Tian \cite{SoTi11} who showed that the scalar curvature remains uniformly bounded on any minimal model with semi-ample canonical bundle.

Most notably the K\"ahler-Ricci flow has been used to carry out an analytic version of the Minimal Model Program (MMP) in algebraic geometry. Roughly speaking, one starts with a compact K\"ahler manifold and through a series of algebraic surgeries produces a manifold in the same birational class which is either a Mori fiber space or a \emph{minimal model}, meaning that the canonical bundle is nef. Remarkably, in \cite{SoTi09,SoTi07,SoTi12} Song-Tian proved that when the K\"ahler-Ricci flow approaches a finite time singularity, one can perform a canonical algebraic surgery and continue the flow in a weak sense on the resulting manifold. The process follows the steps of the MMP and can be continued at each finite time singularity until either the volume of the manifold collapses, and so the final manifold is the required Mori fiber space, or the Kahler-Ricci flow exists for all time, in which case the final manifold is the minimal model in the birational class (see also \cite{CoTo15,FoZh12,Gi14,GrToZh13,SoWe11,SoWe11a,ToWeYa14}). In the later case, it is expected that the flow converges at infinity to a K\"ahler-Einstein metric on the canonical model.

It is a natural question to ask if similar results can be established between the conical K\"ahler-Ricci flow and the Minimal Model Program on logarithmic (log) pairs. Specifically, on a compact K\"ahler manifold $M$ with cone angle $2\pi\beta$ along a given smooth divisor $D$, one can consider $(M,(1-\beta)D)$ as a log pair and ask if the conical K\"ahler-Ricci flow carries out the log Minimal Model Program in a canonical fashion.

In the conical setting, some preliminary results have already been shown to relate the long time behaviour of the conical K\"ahler-Ricci flow to the so-called \textit{twisted canonical bundle} $K_M + (1-\beta)D$. Indeed, Shen also showed that if the twisted canonical bundle is big and nef, then the flow converges to a unique singular conical K\"ahler-Einstein metric outside of a subvariety of codimension at least one \cite{Shen}. As an additional result in this direction, we prove a scalar curvature bound for log minimal pairs which satisfy a semi-ampleness condition on the twisted canonical bundle. This generalizes the scalar curvature bound of Song-Tian to the conical setting.

When the twisted canonical bundle is big and nef, Shen also established bounds on the potential and its time derivative which deteriorate as one approaches the null locus of twisted canonical bundle \cite{Shen}. In the context of our paper we obtain uniform bounds on the potential and its time derivative globally. When the twisted canonical bundle is big and nef this improves the estimates of Shen (see Corollary \ref{BCHM}).

Let $M$ be a compact K\"ahler manifold. We say $\omega^*$ is a conical K\"ahler metric with cone angle $2\pi\beta$ ($0<\beta<1$) along a smooth divisor $D$ if it is a smooth K\"ahler metric on $M \setminus D$ and is asymptotically equivalent along $D$ to the local model metric on $\CC^n$,
\begin{equation}\label{eq:asymptotic}
	\sqrt{-1}\frac{d z_1 \wedge d \bar z_1}{|z_1|^{2(1-\beta)}} + \sum_{j=2}^{n} { \sqrt{-1} dz_j \wedge d \bar z_j}
\end{equation}
where $(z_1,...,z_n)$ are local holomorphic coordinates such that $D=\{z_1=0\}$.

The definition of such a cone metric can be generalized to a metric having singularities along simple normal crossing divisors $D_1,...,D_\ell$ with cone angles $2\pi\beta_k$ ($0<\beta_k<1$) along $D_k$ by requiring that if $p \in \bigcap_{j=0}^r D_{k_j}$ (with $p \notin D_k$ for any $k\notin \{k_1,...,k_r\}$), then $\omega^*$ is asymptotically equivalent to

\[ \sum_{j=1}^r \sqrt{-1} \frac{d z_j \wedge d \bar z_j}{|z_j|^{2(1-\beta_{k_j})}} + \sum_{j=r+1}^{n} { \sqrt{-1} dz_j \wedge d \bar z_j} \]
in a holomorphic coordinate chart centered at $p$ such that locally $D_{k_j} = \{z_j = 0\}$ for $j=1,...,r$.

For simplicity we restrict to the case where $\ell=1$; that is, where there is only a single smooth divisor $D$. We merely remark that straightforward alterations would extend our results to the case of simple normal crossing divisors.

For any conical metric $\omega^*$ we define the Ricci current of $\omega^*$ to be
\[ \Ric(\omega^*) = - \ddbar \log \det(g^*) \]
where locally $\omega^* = \sqrt{-1} g^*_{i \bar j} dz_i \wedge d \bar z_j$. 

Note that $\log \frac{\det(g^*)}{\det(g_0)} \in L^1(M)$ for any smooth reference metric $g_0$, so $\ddbar\log\det(g^*)$ is well-defined as a current on $M$. Outside the divisor $D$, where $\omega^*$ is smooth, this reduces to the classical Ricci form.

Let $\omega_0$ be a K\"ahler metric on $M$. If $h$ is a Hermitian metric on the line bundle $\mathcal{O}_M(D)$ and $S$ is a section vanishing along $D$, it is well known that for sufficiently small positive constant $k$,
\[ \omega^* = \omega_0 + k \ddbar \|S\|_h^{2\beta} \]
is a conical K\"ahler metric with cone angle $2\pi\beta$ along $D$.

We say $\omega$ is a solution to the \textit{normalized conical K\"ahler-Ricci flow} starting with a model conical K\"ahler metric $\omega^*$ if it satisfies, in the sense of currents,
\begin{equation}\label{eq:CKRF}
\begin{cases}
\frac \d{\d t} \omega = -\Ric(\omega) - \omega + 2\pi(1-\beta)[D] \\ 
\omega(\cdot,0) = \omega^* = \omega_0 + k\ddbar\|S\|_h^{2\beta}
\end{cases}
\end{equation}
where $[D]$ is the current of integration along $D$. From \cite{ChenWang1} (see also Theorem 2.1 in \cite{Shen} using results of \cite{CGP,GP}) we know that the flow remains conical as long as it exists and so this equation is well defined.

Furthermore, by Shen \cite{Shen} a unique solution to ~\eqref{eq:CKRF} exists for all time if and only if the twisted canonical bundle, $K_M + (1-\beta)D$, is nef, or equivalently, $(M,(1-\beta) D)$ is a log minimal model.

Presently we obtain the following theorem generalizing the scalar curvature bound of Song-Tian \cite{SoTi11} to the conical setting.

\begin{thm}\label{main}
Let $M$ be a compact K\"ahler manifold of complex dimension $n$ and fix cone angle $0<\beta<1$. Assume there exists a morphism, 
\[ \pi: M \rightarrow Y \subseteq Z, \]
from $M$ to a smooth compact K\"ahler manifold $Z$ with image an irreducible normal subvariety $Y$, and a divisor $D'\in \mathrm{Div}(Y)$ such that $D=\pi^*D'$ defines a smooth divisor on $M$.

In addition, assume there exists a K\"ahler class $[\omega_Z]\in H^{1,1}(Z; \RR)$ such that
\[ \pi^*\omega_Z \in [K_M + (1-\beta)D]. \]
Then for any K\"ahler metric $\omega_0$ there exists constants $C,k_0>0$ such that for all $0<k<k_0$, the solution to ~\eqref{eq:CKRF} satisfies
\[ |R(t)| \leq C \text{,  on  } M \setminus D \text{ for all } t\in[0,\infty) \]
where $R(t) = \tr{\omega}{\Ric(\omega)}$ is the scalar curvature of the evolving metric.
\end{thm}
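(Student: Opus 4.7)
The plan is to translate the conical K\"ahler--Ricci flow \eqref{eq:CKRF} into a scalar parabolic complex Monge--Amp\`ere equation for a potential $\varphi$, establish uniform $C^0$ bounds on $\varphi$ and $\dot\varphi$ using the semi-ampleness hypothesis, and finally extract the two-sided scalar-curvature bound by combining a parabolic Schwarz lemma for $\pi$ with maximum-principle arguments on suitable combinations of $\dot\varphi$ and $\varphi$. The overall strategy follows Song--Tian \cite{SoTi11} in the smooth case; the new technical work is that every step must survive the conical smoothing of \cite{CGP,JMR}, as carried out at the parabolic level in \cite{ChenWang1,Shen}.

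Concretely, the class identity $\pi^*\omega_Z\in[K_M+(1-\beta)D]$ supplies a Hermitian metric $h$ on $\mathcal{O}_M(D)$ and a volume form $\Omega$ on $M$ with $\|S\|_h^{-2(1-\beta)}$ growth along $D$ such that $\Ric(\Omega)=\pi^*\omega_Z-(1-\beta)\Theta_h$ on $M\setminus D$, where $\Theta_h$ is the curvature of $h$. Setting $\hat\omega_t = e^{-t}\omega_0+(1-e^{-t})\pi^*\omega_Z$ and $\omega(t) = \hat\omega_t + k\ddbar\|S\|_h^{2\beta} + \ddbar\varphi$, the flow reduces to
\[ \dot\varphi = \log\frac{\omega^n}{\Omega} - \varphi,\qquad \varphi(\cdot,0)=0. \]
I would bound $\varphi$ in $L^\infty$ by a maximum-principle comparison with a pullback from $Y$ on the upper side and by a parabolic Kolodziej-type estimate on the lower side (using $\int_M\Omega<\infty$, which holds because $\beta>0$); then bound $\dot\varphi$ via the maximum principle applied to $\pm(e^t\dot\varphi+\varphi)$ on the $\ve$-smoothed flow and pass to the limit $\ve\to 0^+$. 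Differentiating the Monge--Amp\`ere equation and inserting the prescribed Ricci current of $\Omega$ yields a schematic identity
\[ R = -\Delta_\omega\dot\varphi + \mathrm{tr}_\omega(\pi^*\omega_Z) + F - n, \]
where $F$ is a combination of traces of the smooth forms $\omega_0$, $\Theta_h$, $\hat\omega_t$, and $k\ddbar\|S\|_h^{2\beta}$.

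The bound on $\mathrm{tr}_\omega(\pi^*\omega_Z)$ follows from the parabolic Schwarz lemma for $\pi:(M,\omega(t))\to(Z,\omega_Z)$, exploiting the upper bound on the bisectional curvature of $\omega_Z$ together with the $C^0$ estimates above; the other summands in $F$ are controlled similarly by standard parabolic trace bounds. A two-sided estimate on $\Delta_\omega\dot\varphi$ is then obtained by applying the parabolic maximum principle to Song--Tian-type quantities $Q_\pm = \pm(\dot\varphi+\varphi-nt)+A\mathrm{tr}_\omega(\pi^*\omega_Z)$, whose evolution acquires a favourable Laplacian term once $A$ is taken sufficiently large. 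The principal obstacle is that all of these maximum-principle arguments take place on the noncompact region $M\setminus D$, where $\omega$ and its curvature degenerate along the divisor. I would handle this by running the entire argument on the $\ve$-smoothed flows of \cite{CGP,JMR,ChenWang1,Shen} and augmenting each maximum-principle quantity by a penalty $-\delta\log\|S\|_h^2$, forcing its extrema into a region $\{\|S\|_h\geq c(\delta)\}$ uniformly in $\ve$; letting first $\ve\to 0^+$ and then $\delta\to 0^+$ gives the required estimate on $M\setminus D$. The most delicate point is to verify that the Schwarz-lemma calculation retains a useful sign uniformly in $\ve$, since the bisectional curvature of the smoothed metric diverges along $D$; this is managed by isolating the model-cone contribution as in \cite{CGP,JMR} together with the observation that $\pi^*\omega_Z$, being a smooth form on $M$, has bounded $\omega$-trace in the conical direction.
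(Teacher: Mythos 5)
Your skeleton --- rewrite the flow as a parabolic Monge--Amp\`ere equation, run uniform estimates on the Campana--Guenancia--Paun smoothings, combine a parabolic Schwarz lemma for $\pi$ with maximum-principle arguments, and pass to the limit $\epsilon\to 0$ --- is exactly the strategy of the paper, but two steps as you describe them would not go through. First, the lower bound on $\dot\varphi_\epsilon$: the maximum principle applied to $e^t\dot\varphi+\varphi$ (and its variants) only yields the \emph{upper} bound, since the evolution produces the term $-\tr{\omega_{\varphi_\epsilon}}{\omega_0}$, whose sign is favourable in one direction only. Because $\pi$ is in general not finite, $\omega_t^n$ degenerates like $e^{-(n-\kappa)t}$ and the lower bound on $\dot\varphi_\epsilon$ is genuinely the hard $C^0$ estimate; the paper (following Song--Tian) obtains it by solving the auxiliary family of elliptic Monge--Amp\`ere equations \eqref{eq:yau} for potentials $\xi_{r,\epsilon}$, gluing them into a time-dependent barrier $Q$, and applying the maximum principle to $H=\dot\varphi_\epsilon+2\varphi_\epsilon+2k\chi-Q$. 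Without some such auxiliary construction the bound on $u=\varphi_\epsilon+\dot\varphi_\epsilon+k\chi$, which drives every later estimate, is missing.

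Second, you misidentify the delicate point in the Schwarz lemma. The bisectional curvature entering that computation is that of the \emph{fixed target} $(Z,\omega_Z)$, which is smooth and compact, not that of the smoothed domain metric; the real issue is the twist term $-\langle \Theta_\epsilon,\widehat\omega_\infty\rangle$ in the evolution of $\psi=\tr{\omega_{\varphi_\epsilon}}{\widehat\omega_\infty}$. The paper controls it via the identity \eqref{eq:theta}, which gives $\Theta_\epsilon\geq -C\,\widehat\omega_\infty$ uniformly in $\epsilon$ precisely because $h=\pi^*h'$, so $R_h=\pi^*R_{h'}$ is dominated by $\pi^*\omega_Z=\widehat\omega_\infty$; this is where the hypothesis $D=\pi^*D'$ enters essentially, and your proposal does not supply this mechanism. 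Relatedly, your quantities $Q_\pm$ are zeroth order, so the maximum principle applied to them cannot bound $\Laplace\dot\varphi$; one needs a Cheng--Yau gradient estimate on $u$ first and then a genuine second-order quantity such as $\frac{B-\Laplace u-\psi}{B-u}$, as in Sections 7--8. Finally, the penalty $-\delta\log\|S\|_h^2$ is redundant: the $\epsilon$-smoothed flows live on all of the compact manifold $M$, so the maximum principle applies globally and only the single limit $\epsilon\to 0$ is needed.
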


\begin{remark}
Although we do not state its precise form, the constant $k_0$ can be made completely explicit and can be determined by the interested reader.
\end{remark}

\begin{remark}
We have two situations in mind for the setup of the theorem. The first is if $Y$ itself is smooth, then we take $Z=Y$. The second is when $Z$ is either $\P^N$ or a product of projective spaces and $Y \subseteq Z$ is a normal subvariety.
\end{remark}

Our method of proof follows that of Song-Tian \cite{SoTi11}, however because the conical metrics are not smooth our result does not follow immediately from their approach. Instead we adopt the technique of Liu-Zhang \cite{LiZh}, which uses the smooth approximation metrics developed in Campana-Guenancia-Paun \cite{CGP} and Guenancia-Paun \cite{GP} to obtain a family of smooth solutions which approximate the solutions to the singular equation. We prove uniform estimates on the scalar curvature of these approximation solutions, and then, by the argument of Shen \cite{Shen}, these approximations converge to a solution of the conical K\"ahler-Ricci flow from which we obtain our uniform estimate along the flow.

Recall that we say $M$ is a minimal model if the canonical divisor $K_M$ is nef, and we say a line bundle is semi-ample if any sufficiently large multiple is globally generated. As an application of the previous theorem we obtain the following useful corollary.

\begin{cor}
Let $M$ be a projective minimal model of Kodaira dimension $\kappa = Kod(M)$ and assume $K_M$ semi-ample. Let
\[ \pi: M \rightarrow M_{can} \subseteq \P^N \]
be the morphism from $M$ to its canonical model determined by the pluricanonical system $|m K_M|$ for $m$ sufficiently large. Thus $K_M = \pi^*\mathcal{O}_{\P^N}(1)$, and if $D'$ is a divisor on $M_{can}$ such that $D=\pi^*D'$ is smooth, then the scalar curvature of the solution to the normalized conical K\"ahler-Ricci flow starting with conical metric $\omega_0 + k\ddbar \|S\|_h^2$ is uniformly bounded away from $D$, for all $k$ sufficiently small.
\end{cor}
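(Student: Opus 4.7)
The plan is to deduce the corollary as a direct application of Theorem~\ref{main} with $Z = \P^N$ and $Y = M_{can}$; essentially all of the analytic work has been done in the main theorem, and the task reduces to verifying its three hypotheses in this specific setting.

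First I would observe that the canonical model $M_{can} = \mathrm{Proj}\bigoplus_{m \geq 0} H^0(M, mK_M)$ is an irreducible normal projective variety by the general theory of canonical models for semi-ample $K_M$, and sits in $\P^N$ via the pluricanonical embedding associated to $|mK_M|$ for $m$ large. Hence the morphism $\pi : M \to M_{can} \subseteq \P^N$ matches the theorem's hypotheses on $(\pi, Y, Z)$, with $Z = \P^N$ a smooth compact K\"ahler manifold; the smoothness of $D = \pi^*D'$ is assumed outright.

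The real content is verifying the K\"ahler class condition $\pi^*[\omega_Z] \in [K_M + (1-\beta)D]$. Because $H^{1,1}(\P^N; \RR) = \RR [\omega_{FS}]$ is one-dimensional, every K\"ahler class on $\P^N$ has the form $c[\omega_{FS}]$ with $c > 0$; using the identification $K_M = \pi^*\mathcal{O}_{\P^N}(1)$ (up to a positive rational factor), such a class pulls back to a positive multiple of $[K_M]$ in $H^{1,1}(M; \RR)$. I therefore need $c[K_M] = [K_M] + (1-\beta)[D]$, equivalently $[D] = \tfrac{c-1}{1-\beta}[K_M]$. This is satisfied whenever $[D']$ is a positive rational multiple of the hyperplane class on $M_{can}$, for example if $D'$ is taken as a generic degree-$d$ hyperplane section of $M_{can} \subseteq \P^N$: then $[D] = d [K_M]$ and $c = 1 + (1-\beta)d > 0$ is the required positive constant. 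Bertini's theorem ensures that for a generic such section, $D = \pi^*D'$ is smooth on $M$.

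With all three hypotheses in place, Theorem~\ref{main} directly produces constants $C, k_0 > 0$ such that $|R(t)| \leq C$ on $M \setminus D$ for all $t \in [0,\infty)$ and $0 < k < k_0$, which is precisely the assertion of the corollary. I do not anticipate any essential analytic obstacle beyond the content of the main theorem itself; the only mild subtlety is the cohomological class-matching above, which is trivial thanks to the rank-one Picard group of $\P^N$.
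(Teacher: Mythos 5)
Your overall strategy---deduce the corollary from Theorem \ref{main} with $Y = M_{can}$---is exactly what the paper intends (it states this corollary without a separate proof), and your verification of the morphism hypotheses is fine. The issue is in the cohomological step. As you correctly observe, taking $Z = \P^N$ forces any pulled-back K\"ahler class to be a positive multiple of $[K_M]$, so the condition $\pi^*[\omega_Z] = [K_M] + (1-\beta)[D]$ can only hold when $[D]$ is proportional to $[K_M]$. You then simply \emph{assume} this, e.g.\ by taking $D'$ to be (a multiple of) a generic hyperplane section. But the corollary asserts the bound for an arbitrary divisor $D'$ on $M_{can}$ with $D = \pi^*D'$ smooth; indeed the paper explicitly describes $D \in |\lambda K_M|$ as only ``a particular case'' of the corollary. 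As written, your argument therefore proves a strictly weaker statement than the one claimed.

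To handle general $D'$ you need the flexibility, flagged in the remark after Theorem \ref{main} and exploited in the paper's proof of Corollary \ref{BCHM}, of replacing $\P^N$ by a product of projective spaces: writing $\mathcal{O}_{M_{can}}(1) + (1-\beta)D'$ as a positive combination $\sum_i a_i H_i$ of very ample divisors on $M_{can}$, embed $M_{can}$ into $Z = \P^{N_1}\times\cdots\times\P^{N_r}$ via the $|m_iH_i|$ and pull back $\sum_i \frac{a_i}{m_i}\,p_i^*\omega_i$. This requires $\mathcal{O}_{M_{can}}(1) + (1-\beta)D'$ to be an ample $\RR$-divisor on $M_{can}$, an implicit hypothesis you should surface explicitly: for arbitrary $D'$ it can fail, and then $K_M + (1-\beta)D$ need not even be nef, so the flow need not exist for all time and the corollary is vacuous or false. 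Two minor further points: $K_M = \pi^*\mathcal{O}_{\P^N}(1)$ should really be read as $mK_M = \pi^*\mathcal{O}_{\P^N}(1)$, which your scaling constant $c$ absorbs; and Bertini is not needed, since smoothness of $D = \pi^*D'$ is a hypothesis of the corollary rather than something to be arranged.
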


If we assume the abundance conjecture, which predicts that whenever $K_M$ is nef it must be semi-ample, the previous corollary applies to all minimal models.

As a particular case, the above corollary establishes a uniform scalar curvature bound if $D \in |\lambda K_M|$ for $\lambda$ any positive constant when $K_M$ is semi-ample. 

In the complementary setting where $M$ is Fano, the conical K\"ahler-Ricci flow has previously been studied where $D$ is an effective divisor proportional to the canonical bundle. If $D \in |-\lambda K_M|$, for $\lambda$ positive constant, this was first studied by Chen-Wang \cite{ChenWang2} who showed long time existence of the flow when $1 - (1-\beta)\lambda \leq 0$, that is, when the twisted canonical bundle is ample or trivial. This same situation was later studied by Liu-Zhang \cite{LiZh} who showed long time existence of the properly normalized flow and established a scalar curvature bound when $1 - (1-\beta)\lambda > 0$, i.e. when the twisted canonical bundle is anti-ample.

Recall that we say an $\RR$-divisor $E$ is big and nef if it is nef and satisfies
\[ [c_1(E)]^n = \int_M{c_1(E)^n} > 0 .\]
Our theorem also has applications to situations where the twisted canonical bundle is big and nef.

\begin{cor}\label{BCHM}
Let $M$ be a compact K\"ahler manifold with smooth irreducible divisor $D$ and suppose $K_M + (1-\beta)D$ is big and nef for some $0<\beta<1$. If 
\[ D \cdot (K_M + (1-\beta)D)^{n-1} = \int_D{c_1(K_M + (1-\beta)D)^{n-1}} > 0, \]
then the scalar curvature of the solution to the normalized conical K\"ahler-Ricci flow is uniformly bounded outside $D$. Moreover, the potential and its time derivative are uniformly bounded along the flow.
\end{cor}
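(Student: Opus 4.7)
My approach is to verify the hypotheses of Theorem~\ref{main} in the setting of the corollary. Since $M$ carries the big line bundle $L := K_M + (1-\beta)D$, it is Moishezon, and being K\"ahler it is therefore projective. The log pair $(M,(1-\beta)D)$ is klt because $M$ and $D$ are smooth and $1-\beta\in(0,1)$, and $L$ is nef and big, so the Kawamata--Shokurov base-point-free theorem for klt pairs (in its $\RR$-divisor form, or by approximating $\beta$ through rational values and passing to the limit) yields that $L$ is semi-ample. For $m\ge 1$ sufficiently divisible, the linear system $|mL|$ is base-point-free and determines a morphism $\pi : M \to Z := \P^N$ with image $Y := \pi(M)$, an irreducible normal subvariety---the log canonical model of the pair. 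Since $L$ is big, $\pi$ is birational onto $Y$. Letting $A$ be the ample class on $Y$ with $\pi^*A = L$ and taking $\omega_Z := \tfrac{1}{m}\omega_{FS}$, one has $[\pi^*\omega_Z] = [L]$, yielding the cohomological input required by Theorem~\ref{main}.

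For the divisor, set $D' := \pi_*D \in \mathrm{Div}(Y)$. The projection formula gives
\[
0 \;<\; D\cdot L^{n-1} \;=\; D\cdot(\pi^*A)^{n-1} \;=\; (\pi_*D)\cdot A^{n-1},
\]
so $\pi_*D \neq 0$ by ampleness of $A$; in particular $D$ is not $\pi$-exceptional and $D'$ is an irreducible Weil divisor on $Y$. To check $\pi^*D' = D$ exactly, write $K_M = \pi^*K_Y + \sum_i c_iE_i$ and $\pi^*D' = D + \sum_i b_iE_i$, with $E_i$ ranging over the $\pi$-exceptional prime divisors on $M$. The crepant identity $K_M + (1-\beta)D = \pi^*(K_Y + (1-\beta)D')$ forces $c_i = (1-\beta)b_i$ for each $i$. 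Normality of $Y$ ensures each $\pi(E_i)$ has codimension at least two in $Y$, so no $E_i$ maps onto $D'$; combining this with the klt structure on $(M,(1-\beta)D)$ and the ampleness of $A$ then excludes any positive $b_i$, yielding $\pi^*D' = D$.

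Having verified all the hypotheses of Theorem~\ref{main}, the uniform scalar curvature bound outside $D$ follows directly. The uniform bounds on the potential and its time derivative are, as announced in the abstract and introduction, established as intermediate estimates in the proof of Theorem~\ref{main}, and so they hold here as well.

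\textbf{Main obstacle.} The delicate step is verifying $\pi^*D' = D$ exactly. The positivity hypothesis $D\cdot L^{n-1}>0$ prevents $D$ itself from being contracted, but ruling out a positive contribution from an exceptional divisor $E_i$ whose image is contained in $D'$ requires a finer argument combining the crepancy relation $c_i = (1-\beta)b_i$ with the klt nature of the pair and the geometry of the log canonical morphism. Should this geometric step prove intractable as a direct black-box application, an alternative is to adapt the approximation-and-estimate machinery behind Theorem~\ref{main} to the slightly more general setting in which $\pi^*D'$ may contain exceptional components.
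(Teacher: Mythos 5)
Your overall strategy is the same as the paper's: invoke the base-point-free theorem for the klt pair $(M,(1-\beta)D)$ to produce the log canonical contraction $\pi:M\to Y$ with $K_M+(1-\beta)D=\pi^*H$ for $H$ ample, note that bigness plus connected fibers makes $\pi$ birational, use the positivity hypothesis to see that $D$ is not contracted, set $D'=\pi_*D$, and feed everything into Theorem \ref{main}. Two points of divergence deserve comment. First, since $\beta$ need not be rational, $L=K_M+(1-\beta)D$ is only an $\RR$-divisor and the linear system $|mL|$ you use to build $\pi$ and $\omega_Z$ does not literally exist; the paper instead writes the ample $\RR$-divisor $H=\sum a_iH_i$ with each $H_i$ an ample $\mathbb{Z}$-divisor, embeds $Y$ by the $|m_iH_i|$ into a product $Z=\P^{N_1}\times\cdots\times\P^{N_r}$, and takes $\omega_Y=\sum\frac{a_i}{m_i}\Phi^*_{|m_iH_i|}\omega_i\in c_1(H)$. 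This is precisely why Theorem \ref{main} is stated with a general ambient $Z$ rather than a single projective space, and your construction should be repaired along these lines (or by restricting to rational $\beta$, which the corollary does not do).

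Second, and more seriously, the step you flag as the main obstacle is indeed where your argument is incomplete. The crepancy relation $c_i=(1-\beta)b_i$ is correct but by itself perfectly consistent with $b_i>0$, and the sentence claiming that normality, klt-ness, and ampleness ``then exclude any positive $b_i$'' contains no actual argument; as written this is an assertion, not a proof. The paper's route through this point is different and more direct: since $H$ is ample, the null locus $\mathrm{Null}(\pi^*H)=\bigcup_{V\cdot(\pi^*H)^{\dim V}=0}V$ is exactly the union of the subvarieties contracted by $\pi$, so $\pi$ is an isomorphism off this set; the hypothesis $D\cdot(\pi^*H)^{n-1}>0$ together with irreducibility of $D$ shows $D\not\subset\mathrm{Null}(\pi^*H)$, hence $D'=\pi(D)$ is a divisor on $Y$ with strict transform $D$, and the paper concludes $D=\pi^*D'$ from this. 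If you want to keep your discrepancy computation, you must actually rule out an exceptional prime divisor $E_i$ with $\pi(E_i)\subseteq D'$ (which is what $b_i>0$ means), and that is not a formal consequence of $c_i=(1-\beta)b_i$; you should either supply that geometric argument or switch to the paper's null-locus formulation. Your final paragraph on the potential and its time derivative matches the paper: those bounds are the uniform estimates of Section 3 combined with the convergence of Section 9.
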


The organization of the rest of the paper is as follows: In Section 2, we introduce a sequence of twisted K\"ahler-Ricci flows which approximate the conical K\"ahler-Ricci flow, and we prove Corollary \ref{BCHM}. In Section 3, we establish uniform bounds on the potentials of the approximation equations and their time derivatives. In Section 4, we establish uniform lower bounds on the evolving scalar curvature of the twisted K\"ahler-Ricci flows. In Section 5, we reduce the upper bounds on scalar curvature to a Laplacian estimate. In Section 6, we prove a parabolic Schwarz lemma for the twisted K\"ahler-Ricci flows. In Section 7, we obtain a uniform gradient estimate, and in Section 8 we prove the Laplacian estimate. Finally in Section 9, we discuss the convergence of the twisted K\"ahler-Ricci flows to a solution of the conical K\"ahler-Ricci flows and prove Theorem \ref{main}.

\section*{Acknowledgements}
This article would not exist were it not for the continued support and counsel of my thesis advisors Valentino Tosatti and Ben Weinkove. The author thanks them for their many helpful conversations and improvements toward the final version of this paper.

\section{Preliminaries}

Fix an initial K\"ahler metric $\omega_0$ on $M$ and let $\kappa = \dim (Y)$. Since $\omega_Z|_Y$ is a K\"ahler metric on $Y$ we can define a semi-positive (1,1)-form on $M$,
\[ \widehat\omega_\infty = \pi^* \omega_Z|_Y .\]
We use $\widehat\omega_\infty$ to define the family of smooth K\"ahler metrics,
\[\omega_t = e^{-t}\omega_0 + (1-e^{-t})\widehat{\omega}_\infty .\]

Now, fix a Hermitian metric $h'$, in the sense of \cite{EGZ09}, on $\mathcal{O}_Y(D')$ along with a section $S'$ vanishing along $D'$ normalized such that
\[ \sup_{Y} \|S'\|_{h'}^2 = 1 .\]
Since $D = \pi^* D'$ (i.e. $\mathcal{O}_M(D) = \pi^* \mathcal{O}_Y(D')$) we can define a Hermitian metric $h = \pi^* h'$ on $\mathcal{O}_M(D)$ by
\[ h(x) = \pi^* h'(x) = h' \circ \pi(x) \]
and the curvature form of $h$ is precisely
\[ R_h = - \ddbar \log h = - \ddbar \log \pi^* h' = - \pi^* \ddbar \log h' = \pi^* R_{h'} .\]
Then $R_{h'}$ is smooth in the sense of \cite{EGZ09}, and $R_{h}$ is a smooth (1,1)-form on $M$.
Similarly, $S = \pi^*S'$ defines a global section of $\mathcal{O}_M(D)$ vanishing along $D$ and satisfies $\sup_M {\|S\|_h^2} = 1$.

We then define the sequence of reference cone metrics, 
\[\overline\omega_t = \omega_t + k\ddbar\|S\|_h^{2\beta}\]
where $k$ is a small positive constant.

Note that we can always choose $k$ sufficiently small such that $\overline\omega_t$ remains positive for all $t\geq 0$. Indeed, let $C_0>0$ be a constant such that
\[ C_0 \omega_Z|_Y \geq R_{h'} \geq - C_0 \omega_Z|_Y .\]
Then, after pulling back by $\pi$, we obtain
\begin{equation}\label{eq:C0}
	C_0 \widehat\omega_\infty \geq R_h \geq -C_0 \widehat\omega_\infty.
\end{equation}
Thus we calculate
\begin{align*}
	\ddbar \|S\|_h^{2\beta} & = \beta^2 \frac{\sqrt{-1} \nabla S \wedge \overline{\nabla S}} {\|S\|_h^{2(1-\beta)}} - \beta \|S\|_h^{2\beta} R_h \\
	& \geq \beta^2 \frac{\sqrt{-1} \nabla S \wedge \overline{\nabla S}} {\|S\|_h^{2(1-\beta)}} - \beta C_0 \widehat\omega_\infty \\
	& \geq -\beta C_0 \widehat\omega_\infty
\end{align*}
where we are writing $\nabla$ for the Chern connection on $\mathcal{O}_M(D)$ associated to the Hermitian metric and $\sqrt{-1} \nabla S \wedge \overline{\nabla S}$ is the (1,1)-form given in local coordinates (with respect to some trivialization of the line bundle) by
\[ \sqrt{-1} \nabla S \wedge \overline{\nabla S} = h \nabla_i S \overline{\nabla_j S} \sqrt{-1} dz_i \wedge d \bar z_j .\]

Thus for $k$ sufficiently small,
\[ \widehat\omega_\infty + k\ddbar\|S\|_h^{2\beta} \geq (1 - k\beta C_0) \widehat\omega_\infty \geq 0 ,\]
and therefore for all $t\in[0,\infty)$,
\[ \overline\omega_t = e^{-t}(\omega_0 + k\ddbar\|S\|_h^{2\beta}) + (1-e^{-t})(\widehat\omega_\infty + k\ddbar\|S\|_h^{2\beta}) \geq C^{-1} \omega_t .\]

Now, fix a smooth volume form $\Omega$ such that
\[ \ddbar \log \Omega = \widehat\omega_\infty - (1-\beta)R_h \in - c_1(M)  ,\]
and normalized such that 
\[\int_M \Omega = \int_M \omega_0^n.\]

Then if we set $\omega = \bar{\omega}_t + \ddbar\varphi$, $\omega$ will satisfy the conical K\"ahler-Ricci flow if and only if
\begin{multline}
	\ddbar\frac \d{\d t}\varphi = \ddbar \log(\bar\omega_t + \ddbar\varphi)^n + 		(1-\beta)\ddbar\log\|S\|_h^2 \\
	+ (1-\beta)R_h - \bar{\omega}_t - \ddbar\varphi\ - \frac \d{\d t} \omega_t.
\end{multline}
Altering $\varphi$ by a function depending only on time, this is equivalent to the parabolic Monge-Amp\`ere equation
\begin{equation}\label{eq:CMA}
	\begin{cases}
		\frac \d{\d t} \varphi = \log\frac{e^{(n-\kappa)t}(\bar\omega_t + 	\ddbar\varphi)^n}{\Omega} + \log\|S\|_h^{2(1-\beta)} - \varphi - k\|S\|_h^{2\beta}\\
		\varphi(\cdot,0)=0.
	\end{cases}
\end{equation}

Conversely, if $\omega$ is a solution of conical K\"ahler-Ricci flow ~\eqref{eq:CKRF} we let $\varphi$ be a solution of the ODE
\begin{equation}\label{eq:ODE}
	\frac \d{\d t} \varphi = \log \frac{e^{(n-\kappa)t}\omega^n \|S\|_h^{2(1-\beta)}}{\Omega} - \varphi - k \|S\|_h^{2\beta} \text{, } \varphi|_{t=0}=0.
\end{equation}
Evidently,
\[ \frac \d{\d t} (e^t(\omega - \bar\omega_t - \ddbar\varphi)) = 0 \text{, } (\omega - \bar\omega_t -\ddbar\varphi)|_{t=0} = 0 ,\]
and so $\omega = \bar\omega_t + \ddbar\varphi$. Plugging back into ~\eqref{eq:ODE} we obtain ~\eqref{eq:CMA}.

By Shen \cite{Shen}, if the twisted canonical bundle is nef, a solution of ~\eqref{eq:CMA} exists and is unique for all $t\in[0,\infty)$. 

We wish to prove that the scalar curvature of this metric is uniformly bounded outside $D$. To do so we consider a smooth approximation of ~\eqref{eq:CMA}.

We adopt the method of Liu-Zhang \cite{LiZh} and first define a family of smooth metrics which closely approximate the family of cone metrics. Following Campana-Guenancia-Paun \cite{CGP} we define
\[\omega_{t,\epsilon} = \omega_t + k\ddbar\chi(\|S\|_h^2 + \epsilon^2)\]
where,
\[\chi(\epsilon^2 + t) \equiv \beta \int_0^t{\frac{(\epsilon^2 + r)^\beta - \epsilon^{2\beta}}{r}dr} .\]

Note that $\omega_{t,\epsilon}$ is a smooth positive K\"ahler metric for all $t\in[0,\infty)$ and for all $\epsilon>0$. For each $t\in[0,\infty)$, $\omega_{t,\epsilon}$ converges to $\bar\omega_t$ globally on $M$ in the sense of currents and in $C^\infty_{loc}$ on $M \setminus D$ as $\epsilon$ tends to zero. Further, $\chi(\epsilon^2 + t)$ is positive and uniformly bounded independent of $\epsilon$ provided $t$ takes values in a bounded range, and there is uniform constant $\gamma>0$ independent of $\epsilon$ such that $\omega_{0,\epsilon} \geq \gamma \omega_0$.

We now consider smooth approximation equations to ~\eqref{eq:CMA},
\begin{equation}\label{eq:smooth}
	\begin{cases}
		\frac \d{\d t} \varphi_\epsilon = \log \frac{e^{(n-\kappa)t}(\omega_{t,\epsilon} + \ddbar\varphi_\epsilon)^n}{\Omega} + \log (\|S\|_h^2 + \epsilon^2)^{(1-\beta)} -\varphi_\epsilon - k\chi\\
		\varphi_\epsilon(\cdot,0)=0.
	\end{cases}
\end{equation}
Here we are using the shorthand $\chi = \chi(\|S\|_h^2 + \epsilon^2)$. Although $\chi$ depends on $\epsilon$, we suppress this dependence and shall use this convention throughout.

Finally, letting $\omega_{\varphi_\epsilon} = \omega_{t,\epsilon} + \ddbar\varphi_\epsilon$, we note that this parabolic complex Monge-Amp\`ere equation is equivalent to the smooth generalized K\"ahler-Ricci flow defined by,
\begin{equation}\label{eq:gen}
	\begin{cases}
		\frac \d{\d t} \omega_{\varphi_\epsilon} = -\Ric(\omega_{\varphi_\epsilon}) - \omega_{\varphi_\epsilon} + (1-\beta)\ddbar\log(\|S\|^2_h + \epsilon^2) +(1-\beta)R_h\\
		\omega_{\varphi_\epsilon}(\cdot,0) = \omega_{0,\epsilon}. 
	\end{cases}
\end{equation}
Such twisted K\"ahler-Ricci flows have been studied for instance in \cite{CoSz,LiZh,Shen,SoTi12}.

Finally, we conclude this section with a proof of Corollary \ref{BCHM}.

\begin{proof}[Proof of Corollary \ref{BCHM}]
Due to the Base Point Free Theorem for $\RR$-divisors (cf. \cite{BiCaHaMc06,HaMc}) whenever the twisted canonical bundle is big and nef there is a normal projective variety $Y$ and a surjective morphism $\pi:M \rightarrow Y$ with connected fibers such that
\[ K_M + (1-\beta)D = \pi^*H \]
for some ample $\RR$-divisor $H$ on $Y$.

Note that the ample $\RR$-divisor $H$ embeds $M$ into a product of projective spaces which allows us to pullback a K\"ahler class representing $c_1(H)$. Indeed if
\[ H = \sum_{i=1}^r {a_i H_i},\]
for $a_i>0$ and $H_i$ ample divisors, then by fixing constants such that $m_i H_i$ is very ample we have the embeddings
\[ \Phi_{|m_i H_i|} : Y \rightarrow \P^{N_i}, \text{ for } i = 1,..,r. \]
Let $\omega_i$ be the Fubini-Study metric on $\P^{N_i}$ for $i=1,...,r$, then
\[ \omega_Y = \sum_{i=1}^r {\frac{a_i}{m_i} \Phi_{|m_i H_i|}^*\omega_i} \in c_1(H), \]
and so $\pi^*\omega_Y \in [K_M + (1-\beta)D]$. Now, letting $Z = \P^{N_1} \times ... \times \P^{N_r}$ and $p_i$ the projection onto the $i^{th}$ factor, we have the embedding,
\[\Phi = \Phi_{|m_1 H_1|}\times...\times\Phi_{|m_r H_r|}:Y \rightarrow Z,\]
so that
\[ \omega_Y = \Phi^*(\sum_{i=1}^r {\frac{a_i}{m_i} p_i^*\omega_i}), \]
which establishes the existence of the K\"ahler class in the theorem.

Now, because $K_M + (1-\beta)D = \pi^*H$ is big, $\pi$ must be generically finite; and since $\pi$ has connected fibers it must in fact be a birational morphism.

Let $S$ be the null locus of $\pi^*H$, that is
\[ S = \textrm{Null}(\pi^*H) = \bigcup_{V\cdot(\pi^*H)^{\textrm{dim} V}=0}{V} \]
where the union on the right hand side is over all irreducible algebraic subvarieties $V \subseteq M$.

Since $H$ is ample, the null locus of $\pi^*H$ is precisely the union of all subvarieties which are contracted by $\pi$ and thus $\pi$ is an isomorphism on $M \setminus S$.

Since 
\[ D \cdot (K_M + (1-\beta)D)^{n-1} = D \cdot (\pi^* H)^{n-1} \neq 0 \]
and $D$ is irreducible, $D$ is not contained in $S$. Therefore $\pi(D)=D'$ defines a divisor on $Y$ and $D$ is its strict transform. Thus we have $D = \pi^*D'$, and so the previous theorem applies.  \end{proof}

That the potential and its time derivative are uniformly bounded follows from the estimates in the ensuing section and the convergence results discussed in Section 9 (cf. \cite{Shen}).

\section{Uniform Bound on Potential}

Let $\varphi_\epsilon$ be a solution to ~\eqref{eq:smooth} so that $\omega_{\varphi_\epsilon} = \omega_{t,\epsilon} + \ddbar \varphi_\epsilon$ solves the generalized K\"ahler-Ricci flow equation ~\eqref{eq:gen}. In this section we show $\varphi_\epsilon$ and $\dot\varphi_\epsilon$ are uniformly bounded.

\begin{prop}
There is constant $C>0$ such that on $M\times[0,\infty)$
\[ |\varphi_\epsilon| \leq C ,\]
and
\[ \frac \d{\d t} \varphi_\epsilon \leq C .\]
\end{prop}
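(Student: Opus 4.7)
I would prove the estimates by separate applications of the parabolic maximum principle, supplemented by a pluripotential-theoretic $L^\infty$ estimate. The key quantitative tool is the volume comparison
\[ \omega_{t,\epsilon}^n \;\le\; C\,e^{-(n-\kappa)t}\,\frac{\Omega}{(\|S\|_h^2+\epsilon^2)^{1-\beta}}, \]
uniform in $t\ge 0$ and $\epsilon\in(0,1]$. This follows by expanding $\omega_{t,\epsilon}^n=(e^{-t}\omega_0+(1-e^{-t})\widehat\omega_\infty+k\ddbar\chi)^n$ and using two facts: the weighting $e^{-(n-\kappa)t}$ comes from $\widehat\omega_\infty^{\kappa+1}\equiv 0$ (since $\widehat\omega_\infty$ is pulled back from the $\kappa$-dimensional $Y$), while the singular factor arises from the rank-one dominant piece $\beta^2\,\nabla S\wedge\overline{\nabla S}/(\|S\|_h^2+\epsilon^2)^{1-\beta}$ of $\ddbar\chi$, which can appear at most once in any non-vanishing wedge product.

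For the upper bound $\varphi_\epsilon\le C$, I apply the maximum principle directly to~\eqref{eq:smooth}: at an interior maximum, $\dot\varphi_\epsilon\ge 0$ and $\omega_{\varphi_\epsilon}^n\le\omega_{t,\epsilon}^n$, so the singular factor in the volume estimate cancels the $(1-\beta)\log(\|S\|_h^2+\epsilon^2)$ term and yields $\varphi_\epsilon\le C$. For the upper bound on $\dot\varphi_\epsilon$, differentiating~\eqref{eq:smooth} in time and using $\dot\omega_{t,\epsilon}=e^{-t}(\widehat\omega_\infty-\omega_0)$ gives
\[ \Bigl(\tfrac{\partial}{\partial t}-\Delta_{\omega_{\varphi_\epsilon}}\Bigr)\dot\varphi_\epsilon \;=\; (n-\kappa) + e^{-t}\,\mathrm{tr}_{\omega_{\varphi_\epsilon}}(\widehat\omega_\infty-\omega_0) - \dot\varphi_\epsilon. \]
Applying the maximum principle to the shifted quantity $\dot\varphi_\epsilon - A\varphi_\epsilon$, with $A$ large enough that $A\omega_{t,\epsilon}\ge e^{-t}(\widehat\omega_\infty-\omega_0)$ as $(1,1)$-forms---possible since $\omega_{t,\epsilon}\ge e^{-t}\omega_0$ for small $k$ and $\widehat\omega_\infty\le C\omega_0$---turns the offending trace term non-positive at the maximum, forcing $\dot\varphi_\epsilon\le n$ there. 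Combined with the $L^\infty$ bound on $\varphi_\epsilon$, this yields $\dot\varphi_\epsilon\le C$ globally.

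The main obstacle is the lower bound $\varphi_\epsilon\ge -C$: a direct minimum principle gives only $\varphi_\epsilon\ge -\kappa t + C$, because on the null locus of $\widehat\omega_\infty$ the reference volume $\omega_{t,\epsilon}^n$ decays like $e^{-nt}\omega_0^n$ instead of the desired $e^{-(n-\kappa)t}$ rate, leaving an uncancelled factor of $e^{-\kappa t}$. To overcome this I would first bound $H = \dot\varphi_\epsilon + \varphi_\epsilon$ from above, via
\[ \Bigl(\tfrac{\partial}{\partial t}-\Delta_{\omega_{\varphi_\epsilon}}\Bigr)H \;=\; -\kappa + \mathrm{tr}_{\omega_{\varphi_\epsilon}}(\widehat\omega_\infty + k\ddbar\chi) \]
combined with a similar shift-by-$\varphi_\epsilon$ maximum principle argument. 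An upper bound on $H$ yields a uniform $L^p$ bound, for some $p>1$, on the Monge-Amp\`ere density $\omega_{\varphi_\epsilon}^n/\Omega$, and a Kolodziej/Eyssidieux-Guedj-Zeriahi style $L^\infty$ estimate in the big semi-positive cohomology class $[\widehat\omega_\infty]$ then gives $\varphi_\epsilon\ge -C$. Reconciling the conical singularity along $D$ (which forces the singular weight $(\|S\|_h^2+\epsilon^2)^{-(1-\beta)}$) with the cohomological collapse of $[\omega_t]$ toward $[\widehat\omega_\infty]$ (which defeats the direct parabolic minimum principle), uniformly in both $\epsilon$ and $t$, is the main technical challenge of the proof.
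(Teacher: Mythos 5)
Your overall architecture --- the volume comparison $\omega_{t,\epsilon}^n\le Ce^{-(n-\kappa)t}\Omega/(\|S\|_h^2+\epsilon^2)^{1-\beta}$, a direct maximum principle for $\varphi_\epsilon\le C$, and an $L^p$ plus Ko\l{}odziej/EGZ argument for the lower bound --- matches the paper, and those two pieces are carried out essentially as in the text. However, there is a genuine circularity in how you order the remaining two estimates. Your bound on $\dot\varphi_\epsilon$ via the quantity $\dot\varphi_\epsilon-A\varphi_\epsilon$ does force $\dot\varphi_\epsilon\le n$ \emph{at the maximum point} $(x_0,t_0)$, but to transfer this to a global bound you must add back $A\varphi_\epsilon-A\varphi_\epsilon(x_0,t_0)$, which requires $\varphi_\epsilon\ge -C$ --- an estimate you only establish afterwards. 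Conversely, your lower bound on $\varphi_\epsilon$ is incomplete as stated: the Ko\l{}odziej/EGZ estimate in the degenerating class only yields an oscillation bound $\mathrm{osc}_M(\varphi_\epsilon+k\chi)\le C$, and to upgrade this to $\varphi_\epsilon\ge -C$ one must pin $\sup_M\varphi_\epsilon$ from below. The natural way to do so is the volume identity $\int_M e^{\dot\varphi_\epsilon+\varphi_\epsilon+k\chi}(\|S\|_h^2+\epsilon^2)^{-(1-\beta)}\Omega=e^{(n-\kappa)t}\int_M\omega_t^n\ge C^{-1}$ combined with H\"older, which produces a point where $\dot\varphi_\epsilon+\varphi_\epsilon\ge -C$; extracting $\varphi_\epsilon\ge -C$ from this uses precisely the upper bound on $\dot\varphi_\epsilon$. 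So each of your two outstanding estimates presupposes the other, and the argument as written does not close.

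The paper breaks the loop by a different choice of auxiliary quantity for the $\dot\varphi_\epsilon$ bound: $G=e^t\dot\varphi_\epsilon-\dot\varphi_\epsilon-\varphi_\epsilon-k\chi-\kappa t-e^t(n-\kappa)$ satisfies $(\frac{\d}{\d t}-\Laplace)G=-\tr{\omega_{\varphi_\epsilon}}{\omega_0}<0$, so its maximum is nonincreasing and $G\le 0$ for all time; this gives $(e^t-1)\dot\varphi_\epsilon\le\varphi_\epsilon+C(1+t+e^t)$, which needs only the \emph{upper} bound on $\varphi_\epsilon$ (already in hand) to conclude $\dot\varphi_\epsilon\le C$ for $t\ge t_0>0$, with uniform short-time existence covering $t\le t_0$. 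With $\dot\varphi_\epsilon\le C$ secured first, the normalization of the EGZ oscillation bound goes through and yields $\varphi_\epsilon\ge -C$. To repair your proof you should either adopt such a quantity (any heat supersolution in which $\varphi_\epsilon$ enters with coefficient $-1$ will do) or supply an independent argument that $\sup_M\varphi_\epsilon\ge -C$. A further, smaller, point: your intermediate step of bounding $H=\dot\varphi_\epsilon+\varphi_\epsilon$ above by a shift-by-$\varphi_\epsilon$ maximum principle needs $A\omega_{t,\epsilon}\ge\widehat\omega_\infty+k\ddbar\chi$, and since the only uniform lower bound on $\omega_{t,\epsilon}$ in terms of $\omega_0$ decays like $e^{-t}$, this comparison must instead be run against the $(1-e^{-t})(\widehat\omega_\infty+k\ddbar\chi)$ piece of $\omega_{t,\epsilon}$ for $t$ bounded away from $0$; as phrased it does not hold for large $t$.
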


\begin{proof}
First, since $\widehat\omega_\infty$ is the pullback of a metric on $Y$ we have
\[ \widehat\omega_\infty^{\kappa + 1} = 0 \text{ on } \pi^{-1}(Y_{reg})\]
where $Y_{reg}$ is the Zariski open subset where $Y$ is smooth. Thus since $\widehat\omega_\infty^{\kappa + 1}$ is smooth and vanishes on an open dense subset of $M$, we have
\[ \widehat\omega_\infty^{\kappa + 1} = 0 \text{ on } M, \]
and therefore there is a constant $C>0$ such that
\[ C^{-1} e^{-nt}\Omega \leq \omega_t^n \leq C e^{-(n-\kappa)t}\Omega .\]
Moreover, since $S = \pi^*S'$ and $h = \pi^*h'$
\[ \ddbar\chi(\|S\|_h^2 + \epsilon^2) = \ddbar\chi(\pi^*(\|S'\|_{h'}^2) + \epsilon^2) = \pi^*\ddbar\chi(\|S'\|_{h'}^2 + \epsilon^2) .\]
Thus,
\[ \widehat\omega_\infty + k\ddbar\chi(\|S\|_h^2 + \epsilon^2) = \pi^*\big(\omega_Z|_Y + k\ddbar\chi(\|S'\|_{h'}^2 + \epsilon^2) \big). \]
Since the form on the left hand side is smooth, by the same argument as above:
\[ (\widehat\omega_\infty + k\ddbar\chi)^{\kappa+1} = 0 .\]
Hence we obtain
\begin{equation}\label{eq:kod}
	\omega_{t,\epsilon}^n \leq C e^{-(n-\kappa)t} \omega_{0,\epsilon}^{n-\kappa} \wedge \omega_{t,\epsilon}^\kappa.
\end{equation}
Next, we have from the calculation in \cite{CGP},
\begin{equation}\label{eq:ddbarchi}
	\ddbar \chi = \beta^2 \frac{\sqrt{-1} \nabla S \wedge \overline{\nabla S}}{(\|S\|_h^2 + \epsilon^2)^{1-\beta}} - \beta ((\|S\|_h^2 + \epsilon^2)^\beta - \epsilon^{2\beta}) R_h,
\end{equation}
where $\sqrt{-1} \nabla S \wedge \overline{\nabla S}$ is defined as above. As in \cite{Shen}, this yields
\begin{equation}\label{eq:shen}
	C^{-1} \frac{\Omega}{(\|S\|_h^2 + \epsilon^2)^{1-\beta}} \leq \omega_{0,\epsilon}^n \leq C \frac{\Omega}{(\|S\|_h^2 + \epsilon^2)^{1-\beta}}
\end{equation}
for uniform constant independent of $\epsilon$.

Next, we claim that for all $k$ sufficiently small there is a uniform constant $C>0$ such that
\[ C\omega_{0,\epsilon} \geq \omega_{t,\epsilon}.\]
Indeed, let $C_0$ be defined as in ~\eqref{eq:C0} and let $C_1>0$ be a constant such that
\[ C_1 \omega_0 > \widehat\omega_\infty .\]
Without loss of generality, we may assume $C_1>1$.

We remark that by the Minkowski inequality we have
\[ 0 \leq (\|S\|_h^2 + \epsilon^2)^\beta - \epsilon^{2\beta} \leq \|S\|_h^{2\beta} \leq 1, \]
and so using ~\eqref{eq:C0},
\[ -C_0 \widehat\omega_\infty \leq ((\|S\|_h^2 + \epsilon^2)^\beta - \epsilon^{2\beta}) R_h \leq C_0 \widehat\omega_\infty. \]
Then using ~\eqref{eq:ddbarchi} and choosing $k \leq \frac{1}{3 \beta C_0 C_1}$ we have,
\begin{align*}
	\widehat\omega_\infty + k\ddbar\chi 
	& = \widehat\omega_\infty - k\beta((\|S\|_h^2 + \epsilon^2)^\beta - \epsilon^{2\beta})R_h + k\beta^2 \frac{\sqrt{-1} \nabla S \wedge \overline{\nabla S}}{(\|S\|_h^2 + \epsilon^2)^{1-\beta}} \\
	& \leq (1 + k\beta C_0)\widehat\omega_\infty + k\beta^2 \frac{\sqrt{-1} \nabla S \wedge \overline{\nabla S}}{(\|S\|_h^2 + \epsilon^2)^{1-\beta}} \\
	& \leq (2 - 2 k\beta C_0 C_1)\widehat\omega_\infty + 2 C_1 k\beta^2 \frac{\sqrt{-1} \nabla S \wedge \overline{\nabla S}}{(\|S\|_h^2 + \epsilon^2)^{1-\beta}} \\
	& \leq 2C_1 \Big( \omega_0 - 2 k\beta((\|S\|_h^2 + \epsilon^2)^\beta - \epsilon^{2\beta})R_h + k\beta^2 \frac{\sqrt{-1} \nabla S \wedge \overline{\nabla S}}{(\|S\|_h^2 + \epsilon^2)^{1-\beta}} \Big) \\
	& = 2 C_1(\omega_0 + k\ddbar\chi).
\end{align*}
So we conclude, for uniform $C>0$ independent of $\epsilon$,
\[ \omega_{t,\epsilon} = e^{-t}(\omega_0 + k\ddbar\chi) + (1-e^{-t})(\widehat\omega_\infty + k\ddbar\chi) \leq C \omega_{0,\epsilon} .\]
Thus using ~\eqref{eq:kod} and ~\eqref{eq:shen} there is another uniform $C>0$ such that
\[ \omega_{t,\epsilon}^n \leq C e^{-(n-\kappa)t}\frac{\Omega}{(\|S\|^2_h + \epsilon^2)^{1-\beta}} .\]

Now, if $\varphi_\epsilon$ achieves a maximum at $(x_0,t_0)$ with $t_0>0$, then we have
\[ \varphi_\epsilon(x_0,t_0) \leq (\varphi_\epsilon + \dot\varphi_\epsilon)(x_0,t_0) \leq \log \frac{e^{(n-\kappa)t}\omega_{t,\epsilon}^n(\|S\|_h^2 + \epsilon^2)^{1-\beta}}{\Omega} - k\chi \leq C.\]
Hence $\varphi_\epsilon$ is bounded above.

We now prove the upper bound on $\dot\varphi_\epsilon$.
First, we have the evolution equations
\[ (\frac \d{\d t} - \Laplace)(\varphi_\epsilon + k\chi) = \dot\varphi_\epsilon - n + \tr{\omega_{\varphi_\epsilon}}{\omega_t} ,\]

\[ (\frac \d{\d t} - \Laplace)\dot\varphi_\epsilon = (n-\kappa)-\dot\varphi_\epsilon + \tr{\omega_{\varphi_\epsilon}}{(\widehat\omega_\infty - \omega_t)} ,\]

\[ (\frac \d{\d t} - \Laplace)(e^t \dot\varphi_\epsilon) = e^t(n-\kappa) -\tr{\omega_{\varphi_\epsilon}}{(\omega_0 - \widehat\omega_\infty)} .\]

Therefore,
\[ (\frac \d{\d t} - \Laplace)(e^t \dot\varphi_\epsilon - \dot\varphi_\epsilon - \varphi_\epsilon - k\chi - \kappa t - e^t(n-\kappa)) = - \tr{\omega_{\varphi_\epsilon}}{\omega_0} < 0 .\]

We conclude that the maximum of $(e^t \dot\varphi_\epsilon - \dot\varphi_\epsilon - \varphi_\epsilon - k\chi - \kappa t - e^t(n-\kappa))$ is decreasing, and so
\[e^t \dot\varphi_\epsilon - \dot\varphi_\epsilon - \varphi_\epsilon - k\chi - \kappa t - e^t(n-\kappa) \leq -k\chi -(n-\kappa) \leq 0 .\]
Thus we have for constants independent of $\epsilon$,
\[ (e^t-1)\dot\varphi_\epsilon \leq C + Ct + Ce^t ,\]
and thus for all $t>t_0>0$,
\[ \dot\varphi_\epsilon \leq Ce^{-t} + Cte^{-t} + C \leq C.\]
Combined with the evolution equation ~\eqref{eq:smooth} and the uniform local existence for the parabolic equation this yields,
\[ \dot\varphi_\epsilon \leq C .\]

Next, for the lower bound on $\varphi_\epsilon$ note that by equation ~\eqref{eq:smooth} we have
\[(\omega_t + \ddbar(\varphi_\epsilon + k\chi_\epsilon))^n = e^{-(n-\kappa)t} \frac{e^{\dot\varphi_\epsilon + \varphi_\epsilon + k\chi}}{(\|S\|^2 + \epsilon^2)^{(1-\beta)}} \Omega\]
where the right hand side is uniformly bounded in $L^p(M,\Omega)$ for $1 <p < \frac{1}{1-\beta}$ since $\varphi_\epsilon$, $\dot\varphi_\epsilon$, and $k\chi$ are all bounded above and $\frac{1}{(\|S\|_h^2 + \epsilon^2)^{(1-\beta)}}$ is uniformly bounded in $L^p(M,\Omega)$ as long as $p(1-\beta)<1$.

Thus from \cite{DP,EyGuZe08} we have
\[ osc_M(\varphi_\epsilon + k\chi)<C\]
for uniform constant which is independent of $\epsilon$. Which implies,
\[ osc_M(\varphi_\epsilon)\leq C \]
for constant independent of $\epsilon$.

To get a uniform bound on $|\varphi_\epsilon|$ we need only establish for every $t \in [0,\infty)$,
\[\varphi_\epsilon(x_{t,\epsilon}) > -C\]
for some $x_{t,\epsilon} \in M$ and some uniform constant independent of $\epsilon$.

First we note that,
\[\int_M{e^{\dot{\varphi_\epsilon} + \varphi_\epsilon + k\chi} \frac{\Omega}{(\|S\|_h^2 + \epsilon^2)^{(1-\beta)}}} = e^{(n-\kappa)t} \int_M{(\omega_{t,\epsilon}+\ddbar\varphi_\epsilon)^n} = e^{(n-\kappa)t} \int_M{\omega_t^n} \geq C^{-1},\]
and by the H\"older inequality,
\begin{align*}
	\int_M{e^{\dot{\varphi_\epsilon} + \varphi_\epsilon + k\chi} \frac{\Omega}{(\|S\|_h^2 + \epsilon^2)^{(1-\beta)}}} & \leq (\int_M{e^{q(\dot\varphi_\epsilon + \varphi_\epsilon + k\chi)}\Omega})^{\frac{1}{q}} (\int_M{\frac{\Omega}{(\|S\|_h^2+\epsilon^2)^{p(1-\beta)}}})^{\frac{1}{p}} \\
	& \leq C (\int_M{e^{q(\dot\varphi_\epsilon + \varphi_\epsilon + k\chi)}\Omega})^{\frac{1}{q}}
\end{align*}
for any fixed $1 < p < \frac{1}{1-\beta}$ and $q$ such that $\frac{1}{p} + \frac{1}{q} = 1$.

Thus for each time $t$, there is some point $x_{t,\epsilon}\in M$ such that,
\[e^{\dot{\varphi_\epsilon}(x_{t,\epsilon}) + \varphi_\epsilon(x_{t,\epsilon}) + k\chi(x_{t,\epsilon})} \geq \frac{C^{-1}}{\int_M{\Omega}}. \]
It follows from the uniform upper bounds on $\dot{\varphi_\epsilon}$ and $\chi$ that
\[e^{\varphi_\epsilon(x_{t,\epsilon})} \geq C^{-1}.\]
And so
\[\varphi_\epsilon(x_{t,\epsilon}) \geq -C.\]
We conclude
\[\varphi_\epsilon \geq -C'.\]
Combined with the uniform upper bounds on $\varphi_\epsilon$ this yields,
\[|\varphi_\epsilon| \leq C\]
for constant independent of $\epsilon$. \end{proof}

We now establish the uniform bound on $\dot\varphi_\epsilon$. The proof is essentially contained in the work of Song-Tian \cite{SoTi11}, with the methods generalized to apply to the present situation.

\begin{prop}
There is uniform constant $C>0$ independent of $\epsilon$ such that on $M\times[0,\infty)$
\[ |\dot\varphi_\epsilon| \leq C. \]
\end{prop}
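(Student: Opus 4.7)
The matching upper bound $\dot\varphi_\epsilon \leq C$ has already been established in the previous proposition, so my plan is to focus on the uniform lower bound $\dot\varphi_\epsilon \geq -C'$. The main object to study is the auxiliary quantity $\theta := \dot\varphi_\epsilon + \varphi_\epsilon + k\chi$. Summing the two evolution identities derived in the previous proposition,
\[
(\partial_t - \Delta)\dot\varphi_\epsilon = (n-\kappa) - \dot\varphi_\epsilon + \tr{\omega_{\varphi_\epsilon}}{(\widehat\omega_\infty - \omega_t)}
\]
and
\[
(\partial_t - \Delta)(\varphi_\epsilon + k\chi) = \dot\varphi_\epsilon - n + \tr{\omega_{\varphi_\epsilon}}{\omega_t},
\]
the $\dot\varphi_\epsilon$ and $\tr{\omega_{\varphi_\epsilon}}{\omega_t}$ terms collapse and one obtains the key identity
\[
(\partial_t - \Delta_{\omega_{\varphi_\epsilon}})\theta \;=\; -\kappa + \tr{\omega_{\varphi_\epsilon}}{\widehat\omega_\infty}.
\]
Since $\widehat\omega_\infty \geq 0$, applying the minimum principle to $\theta$: if the infimum of $\theta$ is attained at an interior space-time point $(x_0, t_0)$ with $t_0 > 0$, then $\tr{\omega_{\varphi_\epsilon}}{\widehat\omega_\infty}(x_0, t_0) \leq \kappa$, and consequently $\widehat\omega_\infty \leq \kappa\,\omega_{\varphi_\epsilon}$ as $(1,1)$-forms at that point.

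To convert the trace bound at $(x_0, t_0)$ into a pointwise lower bound on $\theta(x_0, t_0)$, I would use the rank condition $\widehat\omega_\infty^{\kappa+1} = 0$ established in the previous proposition, which forces $\widehat\omega_\infty$ to have at most $\kappa$ nontrivial eigenvalues $\alpha_1,\dots,\alpha_\kappa$ with respect to $\omega_{\varphi_\epsilon}$. Arithmetic--geometric mean on these eigenvalues gives
\[
\omega_{\varphi_\epsilon}^{n-\kappa}\wedge\widehat\omega_\infty^\kappa \;\leq\; C\,(\tr{\omega_{\varphi_\epsilon}}{\widehat\omega_\infty})^\kappa\,\omega_{\varphi_\epsilon}^n/\kappa^\kappa \;\leq\; C\,\omega_{\varphi_\epsilon}^n
\]
at $(x_0, t_0)$. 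Combined with the volume-collapsing estimate (2.10),
\[
\omega_{t,\epsilon}^n \leq C\,e^{-(n-\kappa)t}\,\omega_{0,\epsilon}^{n-\kappa}\wedge\omega_{t,\epsilon}^\kappa,
\]
and the initial volume bound (2.11), this furnishes a pointwise lower bound $\omega_{\varphi_\epsilon}^n(x_0, t_0) \geq c\,e^{-(n-\kappa)t_0}\Omega/(\|S\|_h^2 + \epsilon^2)^{1-\beta}$. Inverting the Monge-Amp\`ere equation \eqref{eq:smooth} then yields $\theta(x_0, t_0) \geq -C$, and the uniform bounds on $\varphi_\epsilon$ and $k\chi$ from the previous proposition give $\dot\varphi_\epsilon(x_0, t_0) \geq -C'$.

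The main technical obstacle is precisely the degeneration of $\widehat\omega_\infty$ along the fibers of $\pi$: the trace bound controls only the $\kappa$ ``horizontal'' eigenvalues of $\omega_{\varphi_\epsilon}$, while the Monge-Amp\`ere equation depends on all $n$ eigenvalues. This is exactly where the semi-ampleness hypothesis enters through (2.10), which encodes the $e^{-(n-\kappa)t}$ rate of fiber collapse in precisely the form needed to close the argument. The regularization parameter $\epsilon > 0$ introduces no new difficulty: $\chi_\epsilon$ and $\omega_{0,\epsilon}$ are uniformly bounded relative to their singular counterparts, so all constants can be taken uniform in $\epsilon$, and the estimates of Song-Tian \cite{SoTi11} carry over to the approximating smooth flow.
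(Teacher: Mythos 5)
Your reduction to the lower bound, the evolution identity $(\frac{\d}{\d t}-\Laplace)\theta=-\kappa+\tr{\omega_{\varphi_\epsilon}}{\widehat\omega_\infty}$ for $\theta=\dot\varphi_\epsilon+\varphi_\epsilon+k\chi$, and the final step (inverting \eqref{eq:smooth} once one has a lower bound on $\omega_{\varphi_\epsilon}^n$ at the minimum point) are all fine. The gap is in the middle, and it is fatal as written: the minimum principle only gives $\tr{\omega_{\varphi_\epsilon}}{\widehat\omega_\infty}(x_0,t_0)\leq\kappa$, and every inequality you derive from it points the wrong way. The trace bound plus AM--GM yields an \emph{upper} bound $\omega_{\varphi_\epsilon}^{n-\kappa}\wedge\widehat\omega_\infty^{\kappa}\leq C\,\omega_{\varphi_\epsilon}^n$; to extract a lower bound on $\omega_{\varphi_\epsilon}^n$ you would then need a \emph{lower} bound $\omega_{\varphi_\epsilon}^{n-\kappa}\wedge\widehat\omega_\infty^{\kappa}\geq c\,e^{-(n-\kappa)t}\Omega/(\|S\|_h^2+\epsilon^2)^{1-\beta}$, and none is available. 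The estimate \eqref{eq:kod} concerns only the reference metrics $\omega_{t,\epsilon}$ and cannot be transferred to the evolving metric; worse, $\widehat\omega_\infty^{\kappa}$ vanishes identically wherever $d\pi$ drops rank (over $\pi^{-1}(Y_{sing})$ and the critical locus of $\pi$), so the wedge product can be zero at the minimum point. Even away from such points, bounding $\omega_{\varphi_\epsilon}^{n-\kappa}\wedge\widehat\omega_\infty^{\kappa}$ from below amounts to bounding the restriction of $\omega_{\varphi_\epsilon}$ to the fibers of $\pi$ from below, which is essentially the statement you are trying to prove; the argument is circular.

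The missing idea, which is the heart of the paper's proof (following Song--Tian), is to manufacture the needed positivity in the fiber directions by solving for each $r$ the auxiliary elliptic Monge--Amp\`ere equation \eqref{eq:yau}, proving uniform $C^0$ bounds on $\xi_{r,\epsilon}$ (this requires the uniform $L^p$ control of $\Omega/(\|S\|^2+\epsilon^2)^{1-\beta}$ and the Kolodziej-type estimates of \cite{DP,EyGuZe08}, so it does not come for free in the conical approximation), interpolating in time to build the barrier $Q$, and applying the minimum principle to $H=\dot\varphi_\epsilon+2\varphi_\epsilon+2k\chi-Q$. The gain is that $\widehat\omega_\infty+\omega_t+\ddbar Q$ is an honest K\"ahler form whose volume form is bounded below by $C^{-1}e^{-(n-\kappa)t}\Omega/(\|S\|^2+\epsilon^2)^{1-\beta}$, so the trace term in the evolution of $H$ is bounded below by $C^{-1}\bigl(e^{-(n-\kappa)t}\Omega/(\omega_{\varphi_\epsilon}^n(\|S\|^2+\epsilon^2)^{1-\beta})\bigr)^{1/n}\geq C^{-1}e^{-H/n}$, which dominates the logarithm at the minimum and gives $H\geq -C$. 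Without some such auxiliary construction your scheme does not close.
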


\begin{proof}
From the previous proposition it suffices to establish a lower bound. We consider the family of Monge-Amp\`ere operators for $r\in[0,\infty)$,
\begin{equation}\label{eq:yau}
(\omega_r + \ddbar \xi_{r,\epsilon})^n = e^{-(n-\kappa)r}e^{\xi_{r,\epsilon}} \frac{\Omega}{(\|S\|^2 + \epsilon^2)^{1-\beta}}.
\end{equation}
That a unique solution to ~\eqref{eq:yau} exists for each $\epsilon>0$ and $r\in[0,\infty)$ is due to Aubin \cite{aubin} and Yau \cite{yau}.

By the maximum principle, if $(\xi_{r,\epsilon} - k\chi)$ achieves a maximum at $x_0\in M$, then since 
\[ \omega_r + \ddbar \xi_{r,\epsilon} = \omega_{r,\epsilon} + \ddbar(\xi_{r,\epsilon} - k\chi) \]
it follows that
\[ \xi_{r,\epsilon}(x_0) \leq \log \frac{e^{(n-\kappa)r}\omega_{r,\epsilon}^n(\|S\|_h^2 + \epsilon^2)^{1-\beta}}{\Omega}(x_0) - k\chi(x_0) < C .\]
Since $k\chi$ is uniformly bounded, we therefore have the uniform upper bound 
\[ \xi_{r,\epsilon} \leq C .\]

Now, since
\[ \int_M{e^{\xi_{r,\epsilon}}\frac{\Omega}{(\|S\|^2 + \epsilon^2)^{1-\beta}}} = e^{(n-\kappa)r}\int_M{\omega_r^n} \geq C^{-1} ,\]
and, again by the H\"older inequality with $p$ and $q$ defined as above,
\begin{align*}
	\int_M{e^{\xi_{r,\epsilon}}\frac{\Omega}{(\|S\|^2 + \epsilon^2)^{1-\beta}}}
	\leq & (\int_M{e^{q \xi_{r,\epsilon}} \Omega})^{\frac{1}{q}} (\int_M{\frac{\Omega}{(\|S\|_h^2 + \epsilon^2)^{p(1-\beta)}}})^{\frac{1}{p}} \\
	\leq & C (\int_M{e^{q \xi_{r,\epsilon}} \Omega})^{\frac{1}{q}},
\end{align*}
we get an \textit{a priori} estimate on the supremum,
\[ -C \leq \sup \xi_{r,\epsilon} \leq C .\]

Since the right hand of ~\eqref{eq:yau} is uniformly bounded in $L^p(M,\Omega)$ for $1<p<\frac{1}{1-\beta}$, by \cite{DP,EyGuZe08} we have uniformly
\[ osc_M(\xi_{r,\epsilon})\leq C ,\]
and so we conclude we have the uniform bound,
\[ |\xi_{r,\epsilon}|\leq C, \]
independent of $\epsilon$.

Now, let $\rho(t)$ be a smooth monotone non-increasing function on $[0,1]$ such that
\begin{displaymath}
	\rho(t) = \left \{
		\begin{array}{lr}
			1, & t\in[0,\frac 1 3] \\
			0, & t\in[\frac 2 3, 1]
		\end{array}
	\right.
\end{displaymath}
which we use to define a function on $M\times[0,\infty]$ by
\begin{equation}
Q(x,t) = \rho(t-m)\xi_{m+1,\epsilon}(x) + (1-\rho(t-m))\xi_{m+2,\epsilon}(x) \text{,  for  } (x,t)\in M\times[m,m+1].
\end{equation}
Then $Q$ is smooth and uniformly bounded above and below.

Now we define
\[ H = \dot\varphi_\epsilon + 2\varphi_\epsilon + 2k\chi - Q .\]
Then
\begin{align*}
	(\frac \d{\d t} - \Laplace)H 
	& = \dot\varphi_\epsilon - \frac{\d Q}{\d t} + \tr{\omega_{\varphi_\epsilon}}{(\widehat\omega_\infty + \omega_t + \ddbar Q)} -(n+\kappa)\\
	& \geq \log \frac{e^{(n-\kappa)t}\omega_{\varphi_\epsilon}^n(\|S\|^2 + \epsilon^2)^{1-\beta}}{\Omega} + \tr{\omega_{\varphi_\epsilon}}{(\widehat\omega_\infty + \omega_t + \ddbar Q)} - C,
\end{align*}
and since for $t\in[m,m+1]$, $\widehat\omega_\infty + \omega_t \geq \omega_{m+1}$ and $\widehat\omega_\infty + \omega_t \geq \omega_{m+2}$,
\begin{align*}
\widehat\omega_\infty + \omega_t + \ddbar Q & \geq \frac 1 2 \min(\widehat\omega_\infty + \omega_t + \ddbar\xi_{m+1,\epsilon},\widehat\omega_\infty + \omega_t + \ddbar\xi_{m+2,\epsilon}) \\
& \geq \frac 1 2 \min(\omega_{m+1} + \ddbar\xi_{m+1,\epsilon},\omega_{m+2} + \ddbar\xi_{m+2,\epsilon}).
\end{align*}
We obtain for $t\in[m,m+1]$,
\begin{align*}
	(\widehat\omega_\infty + \omega_t + \ddbar Q)^n 
	& \geq 2^{-n} \min((\omega_{m+1} + \ddbar\xi_{m+1,\epsilon})^n,(\omega_{m+2} + \ddbar\xi_{m+2,\epsilon})^n) \\
	& \geq C^{-1} e^{-(n-\kappa)(m+2)} \frac{\Omega}{(\|S\|^2 + \epsilon^2)^{1-\beta}} \\
	& = C^{-1} e^{-(n-\kappa)(m+2-t)}e^{-(n-\kappa)t}\frac{\Omega}{(\|S\|^2 + \epsilon^2)^{1-\beta}} \\
	& \geq C^{-1} e^{-(n-\kappa)t}\frac{\Omega}{(\|S\|^2 + \epsilon^2)^{1-\beta}}.
\end{align*}

Now we set up the maximum principle. Suppose $H$ achieves a minimum at a point  $(x_0,t_0)\in M\times[0,T]$, with $t_0$ in the interval $[m,m+1]$ for some positive integer $m$. Without loss of generality we may assume $t_0>0$. Then
\begin{align*}
	0 
	\geq (\frac \d{\d t} - \Laplace)H 
	& \geq \log \frac {e^{(n-\kappa)t_0}\omega_{\varphi_\epsilon}^n(\|S\|^2 + \epsilon^2)^{1-\beta}}{\Omega} + \tr{\omega_{\varphi_\epsilon}}{(\widehat\omega_\infty + \omega_{t_0} + \ddbar Q)} - C \\
	& \geq \log \frac {e^{(n-\kappa)t_0}\omega_{\varphi_\epsilon}^n(\|S\|^2 + \epsilon^2)^{1-\beta}}{\Omega} + C^{-1}(\frac{e^{-(n-\kappa)t_0}\Omega}{\omega_{\varphi_\epsilon}^n (\|S\|^2 + \epsilon^2)^{1-\beta}})^{\frac 1 n} - C \\
	& \geq C^{-1}(\frac{e^{-(n-\kappa)t_0}\Omega}{\omega_{\varphi_\epsilon}^n (\|S\|^2 + \epsilon^2)^{1-\beta}})^{\frac 1 n} - C \\
	& \geq C^{-1} e^{-H/n} - C.
\end{align*}
where in the last line we used that $\varphi_\epsilon$, $\chi$, and $Q$ are all uniformly bounded.

We conclude
\[ H \geq -C, \]
and therefore
\[ \dot\varphi_\epsilon \geq -C.\]
Thus $\dot\varphi_\epsilon$ is uniformly bounded independent of $\epsilon$.
\end{proof}

\section{Lower Bound on Scalar Curvature}

%Our goal in this section is to show the scalar curvature is uniformly bounded outside the divisor D along solutions to the conical K\"ahler-Ricci flow. To achieve this, we intend to show that the quantity
%\[ R_\epsilon(t) - (1-\beta)\Laplace_{\omega_{\varphi_\epsilon}}\log(\|S\|_h^2 + \epsilon^2) + (1-\beta)tr_{\omega_{\varphi_\epsilon}}R_h \geq - C \]
%for uniform constant $C>0$ which is independent of $\epsilon$. Here
%\[ R_\epsilon (t) = tr_{\omega_{\varphi_\epsilon}}(\Ric(\omega_{\varphi_\epsilon})) \]
%and the Laplacian is with respect to the exolving metric $\omega_{\varphi_\epsilon}$.
%

The following proposition generalizes the well known result that the scalar curvature is bounded below along solutions of the Ricci flow to solutions of the conical K\"ahler-Ricci flow. In fact, it is not difficult to see that it holds for solutions to ~\eqref{eq:CKRF} with arbitrary simple normal crossing divisors.

For convenience, we define the quantity
\begin{equation}\label{eq:ThetaDef}
	\Theta_\epsilon = (1-\beta)(\ddbar\log(\|S\|_h^2 + \epsilon^2) + R_h).
\end{equation}

\begin{prop}
Let $\omega_{\varphi_\epsilon}$ be a solution to the generalized K\"ahler-Ricci flow ~\eqref{eq:gen}. Then the scalar curvature of $\omega_{\varphi_\epsilon}$,
\[ R_\epsilon(t) = - \tr {\omega_{\varphi_\epsilon}} {\ddbar\log\omega_{\varphi_\epsilon}^n}, \]
satisfies the evolution equation
\[(\frac \d{\d t} - \Laplace) (R_\epsilon(t) - \tr{\omega_{\varphi_\epsilon}} {\Theta_\epsilon}) = |\Ric(\omega_{\varphi_\epsilon}) - \Theta_\epsilon|^2 + R_\epsilon(t) - \tr{\omega_{\varphi_\epsilon}}{\Theta_\epsilon}.\]

In particular we have the lower bound,
\[ R_\epsilon(t) - \tr{\omega_{\varphi_\epsilon}}{\Theta_\epsilon} \geq -n + e^{-t}(n + \inf_{M}(R_\epsilon(\cdot,0) - \tr{\omega_{0,\epsilon}}{\Theta_\epsilon(\cdot)}) .\]
\end{prop}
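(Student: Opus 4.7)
The plan is to derive the evolution equation by direct computation from the flow \eqref{eq:gen}, and then obtain the lower bound by applying the parabolic minimum principle to an exponentially rescaled quantity. The structural observation that makes everything clean is that $\Theta_\epsilon$ is a smooth, $t$-independent $(1,1)$-form, so that \eqref{eq:gen} can be rewritten as
\[ \frac{\d}{\d t}\omega_{\varphi_\epsilon} = -\Ric(\omega_{\varphi_\epsilon}) - \omega_{\varphi_\epsilon} + \Theta_\epsilon. \]

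For the evolution equation, I would compute $(\frac{\d}{\d t} - \Laplace)R_\epsilon$ and $(\frac{\d}{\d t} - \Laplace)\tr{\omega_{\varphi_\epsilon}}{\Theta_\epsilon}$ separately and then combine. Tracing the flow with respect to $\omega_{\varphi_\epsilon}$ yields $\frac{\d}{\d t}\log \omega_{\varphi_\epsilon}^n = -R_\epsilon - n + \tr{\omega_{\varphi_\epsilon}}{\Theta_\epsilon}$, and inverting the flow equation gives $\frac{\d}{\d t} g^{i\bar j}_{\varphi_\epsilon} = g^{i\bar k}_{\varphi_\epsilon} g^{l\bar j}_{\varphi_\epsilon}(R_{l\bar k} + g_{l\bar k, \varphi_\epsilon} - (\Theta_\epsilon)_{l\bar k})$. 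Differentiating $R_\epsilon = -\tr{\omega_{\varphi_\epsilon}}{\ddbar\log\omega_{\varphi_\epsilon}^n}$ in $t$ and substituting these identities, a direct calculation in K\"ahler normal coordinates produces
\[ \left(\frac{\d}{\d t} - \Laplace\right) R_\epsilon = |\Ric(\omega_{\varphi_\epsilon})|^2 + R_\epsilon - \langle \Ric(\omega_{\varphi_\epsilon}), \Theta_\epsilon\rangle - \Laplace\tr{\omega_{\varphi_\epsilon}}{\Theta_\epsilon}, \]
while the same formula for $\frac{\d}{\d t} g^{i\bar j}$ combined with the $t$-independence of $\Theta_\epsilon$ gives
\[ \frac{\d}{\d t}\tr{\omega_{\varphi_\epsilon}}{\Theta_\epsilon} = \langle \Ric(\omega_{\varphi_\epsilon}), \Theta_\epsilon\rangle + \tr{\omega_{\varphi_\epsilon}}{\Theta_\epsilon} - |\Theta_\epsilon|^2. \]
Subtracting, the two $\Laplace\tr{\omega_{\varphi_\epsilon}}{\Theta_\epsilon}$ contributions cancel, the cross terms combine into $-2\langle\Ric(\omega_{\varphi_\epsilon}),\Theta_\epsilon\rangle$, and the right-hand side becomes $|\Ric(\omega_{\varphi_\epsilon}) - \Theta_\epsilon|^2 + R_\epsilon - \tr{\omega_{\varphi_\epsilon}}{\Theta_\epsilon}$, which is the claimed identity.

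For the lower bound, set $u = R_\epsilon - \tr{\omega_{\varphi_\epsilon}}{\Theta_\epsilon}$. Newton's inequality $|A|^2 \geq (\tr{\omega_{\varphi_\epsilon}}A)^2/n$ applied to the Hermitian $(1,1)$-form $A = \Ric(\omega_{\varphi_\epsilon}) - \Theta_\epsilon$ yields $|\Ric(\omega_{\varphi_\epsilon}) - \Theta_\epsilon|^2 \geq u^2/n$, so $(\frac{\d}{\d t} - \Laplace)u \geq u^2/n + u$. Introducing the rescaling $w = e^t(u + n)$, a direct expansion produces $(\frac{\d}{\d t} - \Laplace)w \geq e^{-t} w^2/n \geq 0$. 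The parabolic minimum principle applied to $w$ then gives $w(x,t) \geq \inf_M w(\cdot, 0) = n + \inf_M(R_\epsilon(\cdot, 0) - \tr{\omega_{0,\epsilon}}{\Theta_\epsilon(\cdot)})$, and rearranging yields exactly the lower bound in the statement.

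The main technical step is the index bookkeeping in the K\"ahler normal-coordinate calculation of the evolution of $R_\epsilon$; the essential feature that allows the argument to close up cleanly is the $t$-independence of $\Theta_\epsilon$, which causes the two $\Laplace\tr{\omega_{\varphi_\epsilon}}{\Theta_\epsilon}$ terms to cancel upon subtraction. After that, the rescaling $w = e^t(u+n)$ and minimum-principle step follow the classical scalar-curvature lower bound argument for the untwisted K\"ahler-Ricci flow verbatim, and pose no serious obstacle.
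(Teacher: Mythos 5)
Your proposal is correct and follows essentially the same route as the paper: a direct computation of the evolution of $R_\epsilon - \tr{\omega_{\varphi_\epsilon}}{\Theta_\epsilon}$ using the rewritten flow $\frac{\d}{\d t}\omega_{\varphi_\epsilon} = -\Ric(\omega_{\varphi_\epsilon}) - \omega_{\varphi_\epsilon} + \Theta_\epsilon$, followed by the trace inequality and the maximum principle applied to $e^t(Q+n)$. The only difference is organizational (you evolve $R_\epsilon$ and $\tr{\omega_{\varphi_\epsilon}}{\Theta_\epsilon}$ separately, while the paper evolves $\tr{\omega_{\varphi_\epsilon}}{(\Ric(\omega_{\varphi_\epsilon})-\Theta_\epsilon)}$ in one step), and the cancellations you identify are exactly the ones that occur.
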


\begin{proof}

Let $\langle \cdot,\cdot\rangle$ denote the Hermitian inner product associated to the metric $\omega_{\varphi_\epsilon}$.

We calculate,
\[
	\begin{aligned}
		\frac \d{\d t}\tr{\omega_{\varphi_\epsilon}}{(\Ric(\omega_{\varphi_\epsilon}) - \Theta_\epsilon)} 
		& = -\langle \frac \d{\d t} \omega_{\varphi_\epsilon}, \Ric(\omega_{\varphi_\epsilon}) - \Theta_\epsilon \rangle + \tr{\omega_{\varphi_\epsilon}}{(\frac \d{\d t}\Ric(\omega_{\varphi_\epsilon}))} \\
		& = -\langle -\Ric(\omega_{\varphi_\epsilon}) - \omega_{\varphi_\epsilon} + \Theta_\epsilon,\Ric(\omega_{\varphi_\epsilon})-\Theta_\epsilon\rangle - \Laplace \tr{\omega_{\varphi_\epsilon}}{(\frac \d{\d t}\omega_{\varphi_\epsilon})}\\
		& = |\Ric(\omega_{\varphi_\epsilon}) - \Theta_\epsilon|^2 + R_\epsilon(t) - \tr{\omega_{\varphi_\epsilon}}{\Theta_\epsilon} + \Laplace (R_\epsilon(t) - \tr{\omega_{\varphi_\epsilon}}{\Theta_\epsilon})
	\end{aligned}
\]
which proves the evolution equation.

Letting $Q = R_\epsilon(t) - \tr{\omega_{\varphi_\epsilon}}{\Theta_\epsilon}$, and using the well known inequality
\[ |Ric(\omega_{\varphi_\epsilon}) - \Theta_\epsilon|^2 \geq \frac 1n (R_\epsilon(t) - \tr{\omega_{\varphi_\epsilon}}{\Theta_\epsilon})^2 = \frac 1n Q^2 ,\]
it immediately follows that
\begin{align*}
	(\frac \d{\d t} - \Laplace) (e^t (Q + n))
	& \geq e^t(Q+n) + e^t \frac{1}{n} Q (Q + n) \\
	& \geq e^t \frac{1}{n} (Q + n)^2 \geq 0 .
\end{align*}
So we conclude that the minimum of $e^t(Q(\cdot,t)+n)$ is increasing in time, and therefore

\[ (R_\epsilon(t) - \tr{\omega_{\varphi_\epsilon}}{\Theta_\epsilon}) \geq -n + e^{-t}(n + \inf_{M} (R_\epsilon(\cdot,0) - \tr{\omega_{0,\epsilon}}{\Theta_\epsilon(\cdot)})) \]
completing the proof. \end{proof}

We make use of the next lemma to prove a uniform lower bound on $(R_\epsilon(t) - \tr{\omega_{\varphi_\epsilon}}{\Theta_\epsilon})$.

\begin{lem}\label{4.2}
There is a uniform constant $C>0$ independent of $\epsilon$ such that
\[ R_\epsilon(\cdot,0) - \tr{\omega_{0,\epsilon}}{\Theta_\epsilon(\cdot)} \geq -C .\]
\end{lem}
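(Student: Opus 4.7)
The plan is to recast the target quantity as a trace of an explicit $(1,1)$-form and bound that form from below. Using the normalization $\ddbar\log\Omega = \widehat\omega_\infty - (1-\beta)R_h$, a direct computation gives
\[ R_\epsilon(\cdot,0) - \tr{\omega_{0,\epsilon}}{\Theta_\epsilon} = -\tr{\omega_{0,\epsilon}}{\widehat\omega_\infty} - \tr{\omega_{0,\epsilon}}{\ddbar\log(V_\epsilon/\Omega)}, \]
where $V_\epsilon := \omega_{0,\epsilon}^n(\|S\|_h^2+\epsilon^2)^{1-\beta}$. The first term is uniformly bounded below since $0\leq\widehat\omega_\infty\leq C\omega_0\leq C\gamma^{-1}\omega_{0,\epsilon}$, so the whole problem reduces to a uniform upper bound on $\tr{\omega_{0,\epsilon}}{\ddbar\log(V_\epsilon/\Omega)}$.

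The key step is to compute $V_\epsilon$ explicitly. Using \eqref{eq:ddbarchi}, decompose $\omega_{0,\epsilon} = A_\epsilon + B_\epsilon$ with
\[ A_\epsilon := \omega_0 - k\beta\big((\|S\|_h^2+\epsilon^2)^\beta-\epsilon^{2\beta}\big)R_h, \qquad B_\epsilon := \frac{k\beta^2\,\mn\nabla S\wedge\overline\nabla S}{(\|S\|_h^2+\epsilon^2)^{1-\beta}}. \]
Here $A_\epsilon$ is smooth and uniformly bounded in $\epsilon$, while $B_\epsilon$ has pointwise rank at most one, hence $B_\epsilon\wedge B_\epsilon=0$. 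Consequently only the first two terms in the binomial expansion of $(A_\epsilon+B_\epsilon)^n$ survive, and the singular factor $(\|S\|_h^2+\epsilon^2)^{\beta-1}$ from $B_\epsilon$ cancels against $(\|S\|_h^2+\epsilon^2)^{1-\beta}$ to give
\[ V_\epsilon = (\|S\|_h^2+\epsilon^2)^{1-\beta}\,A_\epsilon^n + nk\beta^2\,A_\epsilon^{n-1}\wedge\mn\nabla S\wedge\overline\nabla S. \]
Since $D$ is smooth, $\nabla S$ is nowhere zero along $D$, so the second summand is a smooth top-form uniformly bounded below by a positive constant in a neighborhood of $D$, while the first vanishes there like $(\|S\|_h^2+\epsilon^2)^{1-\beta}$. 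Hence $F := V_\epsilon/\Omega$ is a smooth positive function with $C^{-1}\leq F\leq C$ uniformly in $\epsilon$.

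For the Laplacian estimate, write $\ddbar\log F = F^{-1}\ddbar F - F^{-2}\,\partial F\wedge\overline\partial F$; the gradient-square term has nonpositive trace against $\omega_{0,\epsilon}$, so it suffices to bound $F^{-1}\tr{\omega_{0,\epsilon}}{\ddbar F}$ from above. Differentiating the formula for $V_\epsilon$, the smooth second summand contributes uniformly bounded pieces directly. The dangerous terms come from derivatives of $(\|S\|_h^2+\epsilon^2)^{1-\beta}$ hitting the first summand; these are controlled by pairing against $\omega_{0,\epsilon}^{-1}$, whose eigenvalue in the $\nabla S$ direction is of order $(\|S\|_h^2+\epsilon^2)^{1-\beta}$ thanks to the large contribution of $B_\epsilon$ to $\omega_{0,\epsilon}$, producing exactly the needed cancellation. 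The principal obstacle is bookkeeping these cancellations uniformly in $\epsilon$; the required estimates are in the spirit of the Ricci curvature bounds for regularized conical metrics established in \cite{CGP,GP}.
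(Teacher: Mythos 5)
Your reduction is exactly the paper's: writing $R_\epsilon(\cdot,0)-\tr{\omega_{0,\epsilon}}{\Theta_\epsilon}=-\Laplace_{\omega_{0,\epsilon}}\log\frac{\omega_{0,\epsilon}^n(\|S\|_h^2+\epsilon^2)^{1-\beta}}{\Omega}-\tr{\omega_{0,\epsilon}}{\widehat\omega_\infty}$ via $\ddbar\log\Omega=\widehat\omega_\infty-(1-\beta)R_h$, and disposing of the second term using $\omega_{0,\epsilon}\geq\gamma\omega_0$ and $\widehat\omega_\infty\leq C\omega_0$. The entire content of the lemma is therefore the remaining Laplacian estimate, and here your argument has a genuine gap. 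The paper does not prove this estimate either --- it cites it as the main computation of Campana--Guenancia--Paun \cite{CGP}, Section 4.5 --- but your attempted direct proof contains steps that do not survive scrutiny, so it cannot stand in for that citation.

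Concretely: (i) The "smooth second summand" $nk\beta^2 A_\epsilon^{n-1}\wedge\mn\nabla S\wedge\overline{\nabla S}$ is not harmless, because $A_\epsilon$ contains the factor $(\|S\|_h^2+\epsilon^2)^\beta-\epsilon^{2\beta}$, whose second derivatives are of order $(\|S\|_h^2+\epsilon^2)^{\beta-1}$ and hence not uniformly bounded in $\epsilon$. (ii) More seriously, the concavity computation gives
\[
\ddbar(\|S\|_h^2+\epsilon^2)^{1-\beta}=(1-\beta)(\|S\|_h^2+\epsilon^2)^{-\beta}\Big(\tfrac{(1-\beta)\|S\|_h^2+\epsilon^2}{\|S\|_h^2+\epsilon^2}\,\mn\nabla S\wedge\overline{\nabla S}-\|S\|_h^2R_h\Big),
\]
so after the internal cancellation a genuinely \emph{positive} term of order $(\|S\|_h^2+\epsilon^2)^{-\beta}$ in the $\nabla S$ direction remains. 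Tracing it against $\omega_{0,\epsilon}$, whose inverse eigenvalue in that direction is of order $(\|S\|_h^2+\epsilon^2)^{1-\beta}$, yields only $O\big((\|S\|_h^2+\epsilon^2)^{1-2\beta}\big)$, which is unbounded whenever $\beta>\tfrac12$. So the pairing against $\omega_{0,\epsilon}^{-1}$ does \emph{not} by itself produce the needed cancellation, and having already discarded the nonpositive term $-F^{-2}\mn\d F\wedge\overline{\d}F$ you cannot recover it; the actual proof in \cite{CGP} has to exploit the structure of the full expression rather than estimate the two pieces separately. Your closing sentence defers exactly this bookkeeping to estimates "in the spirit of" \cite{CGP,GP}, which is an acknowledgement that the key step is missing. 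The clean fix is to do what the paper does: state the required inequality $\Laplace_{\omega_{0,\epsilon}}\log\frac{\omega_{0,\epsilon}^n(\|S\|_h^2+\epsilon^2)^{1-\beta}}{\Omega}\leq C$ and cite \cite{CGP}, Section 4.5, where precisely this uniform bound is established.
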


An immediate corollary is the following:

\begin{cor}
$(R_\epsilon(t) - \tr{\omega_{\varphi_\epsilon}}{\Theta_\epsilon})$ is uniformly bounded below along the normalized conical K\"ahler-Ricci flow.
\end{cor}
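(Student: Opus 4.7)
The plan is straightforward concatenation: the corollary follows by substituting the initial-data estimate from Lemma~\ref{4.2} into the time-dependent lower bound established in the preceding proposition, so no new PDE analysis is required.

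First, recall that the proposition already handled the propagation of any initial lower bound along the flow. Setting $Q(\cdot,t) := R_\epsilon(t) - \tr{\omega_{\varphi_\epsilon}}{\Theta_\epsilon}$, the evolution equation combined with the standard trace inequality $|\Ric(\omega_{\varphi_\epsilon}) - \Theta_\epsilon|^2 \geq \tfrac{1}{n}Q^2$ and the parabolic maximum principle applied to $e^t(Q+n)$ produced
\[ Q(\cdot,t) \;\geq\; -n + e^{-t}\bigl(n + \inf_{M} Q(\cdot,0)\bigr), \qquad t \in [0,\infty). \]
This already propagates the bound in time, so the only remaining task is to control $\inf_M Q(\cdot,0)$ uniformly in $\epsilon$.

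That uniform control is precisely the content of Lemma~\ref{4.2}, which supplies $\inf_M Q(\cdot,0) \geq -C$ with $C>0$ independent of $\epsilon$. Plugging this into the propagation estimate yields
\[ Q(\cdot,t) \;\geq\; -n + e^{-t}(n - C). \]
Since $e^{-t} \in (0,1]$ for all $t \geq 0$, a trivial case split on the sign of $n - C$ shows that the right-hand side is bounded below by $-\max\{n,C\}$ for every $t \in [0,\infty)$ and every $\epsilon > 0$, which is exactly the uniform lower bound claimed.

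There is essentially no obstacle here: the substantive work — deriving the evolution equation for $Q$, running the maximum principle to propagate the bound in time, and producing the $\epsilon$-uniform initial estimate — has already been carried out in the preceding proposition and in Lemma~\ref{4.2}. The corollary is a one-line combination of these two ingredients, with the decaying factor $e^{-t}$ ensuring that the transient initial bound becomes an estimate uniform in both $t$ and $\epsilon$.
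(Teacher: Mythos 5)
Your proof is correct and is exactly the argument the paper intends: combine the propagation estimate $Q(\cdot,t) \geq -n + e^{-t}(n + \inf_M Q(\cdot,0))$ from the preceding proposition with the $\epsilon$-uniform initial bound $\inf_M Q(\cdot,0) \geq -C$ from Lemma~\ref{4.2}. The paper treats this as immediate and records no separate proof; your case split on the sign of $n-C$ just makes the obvious conclusion explicit.
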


\begin{proof}[Proof of Lemma \ref{4.2}]
First we recall ~\eqref{eq:ThetaDef}, and 
\[ R_\epsilon(\cdot,0) = -\tr{\omega_{0,\epsilon}}{\ddbar\log \omega_{0,\epsilon}^n} .\]

Rewrite,
\[
	\begin{aligned}
		&   \tr{\omega_{0,\epsilon}}{\Theta_\epsilon} + \tr{\omega_{0,\epsilon}}{\ddbar\log \omega_{0,\epsilon}^n} \\
		& = (1-\beta)\tr{\omega_{0,\epsilon}}{\ddbar\log(\|S\|_h^2+\epsilon^2)} + (1-\beta)\tr{\omega_{0,\epsilon}}{R_h} + \tr{\omega_{0,\epsilon}}{\ddbar\log \omega_{0,\epsilon}^n} \\
		& = \Laplace_{\omega_{0,\epsilon}}\big((1-\beta)\log(\|S\|^2_h + \epsilon^2) + \log\omega_{0,\epsilon}^n\big) + (1-\beta)\tr{\omega_{0,\epsilon}}{R_h} \\
		& = \Laplace_{\omega_{0,\epsilon}}\log\frac{\omega_{0,\epsilon}^n(\|S\|_h^2 + \epsilon^2)^{(1-\beta)}}{\Omega} + \tr{\omega_{0,\epsilon}}{\big((1-\beta)R_h + \ddbar\log\Omega \big)}.
	\end{aligned}
\]

Recall by ~\cite{CGP,GP} there is uniform $\gamma>0$ independent of $\epsilon$ such that
\[\omega_{0,\epsilon} \equiv \omega_0 + k\ddbar\chi(\|S\|^2_h + \epsilon^2) \geq \gamma\omega_0 .\]
Thus
\[ \tr{\omega_{0,\epsilon}}{\big(\ddbar\log\Omega + (1-\beta)R_h \big)} = \tr{\omega_{0,\epsilon}}{\widehat\omega_\infty} \leq \frac{1}{\gamma}\tr{\omega_0}{\widehat\omega_\infty} \leq C.\]
Finally, from \cite{CGP} (see Section 4.5) it has been shown
\[ \Laplace_{\omega_{0,\epsilon}} \log \frac {\omega_{0,\epsilon}^n (\|S\|^2_h + \epsilon^2)^{1-\beta}}{\Omega} \leq C \]
for a uniform constant independent of $\epsilon$.

Thus $\tr{\omega_{0,\epsilon}}{\Theta_\epsilon} + \tr{\omega_{0,\epsilon}}{\ddbar\log\omega_{0,\epsilon}^n}$ is bounded from above independently of $\epsilon$. \end{proof}

\section{Reduction to Laplacian Estimate}
Next we wish to establish a uniform upper bound on $(R_\epsilon(t) - \tr{\omega_{\varphi_\epsilon}}{\Theta_\epsilon})$. Let 
\[ u = \varphi_\epsilon + \dot\varphi_\epsilon + k\chi \]
and
\[ \psi = \tr{\omega_{\varphi_\epsilon}}{\widehat\omega_\infty}, \]
which depend on $\epsilon$, but we suppress this dependence. 

We then have the identity
\begin{equation}\label{eq:id}
\Ric(\omega_{\varphi_\epsilon}) - \Theta_\epsilon = -\ddbar u - \widehat\omega_\infty.
\end{equation}
So it suffices to show $\Laplace u + \psi$ is uniformly bounded below, and since 
\[ 0\leq \psi = \tr{\omega_{\varphi_\epsilon}}{\widehat\omega_\infty} \]
it suffices to prove a lower bound on $\Laplace u$.

We know $u$ is uniformly bounded above and below, and we have the following evolution equations for $u$:
\begin{align*}
& (\frac \d{\d t} - \Laplace) u = - \kappa + \psi, \\
& (\frac \d{\d t} - \Laplace) |\nabla u|^2 = |\nabla u|^2 - |\nabla \overline\nabla u|^2 - |\nabla \nabla u|^2 - \Theta_\epsilon(\nabla u,\overline\nabla u) + 2 \mathrm{Re}  \langle \nabla \psi, \nabla u \rangle ,\\
& (\frac \d{\d t} - \Laplace) \Laplace u = \langle \Ric(\omega_{\varphi_\epsilon}),\ddbar u\rangle - \langle \Theta_\epsilon, \ddbar u \rangle + \Laplace u + \Laplace \psi,
\end{align*}
where in local coordinates we are writing,
\[ \Theta_\epsilon(\nabla u, \overline\nabla u) = \Theta_{\epsilon, i \bar j} \nabla^i u \nabla^{\bar j} u \]
for $\nabla$ the Levi-Civita connection of the evolving metric.

\section{A Parabolic Schwarz Lemma}
We first need to estimate $\psi$. We utilize a parabolic Schwarz lemma adapted to the present setting following the original proof of Song-Tian \cite{SoTi07,SoTi12}.

\begin{prop}\label{1}
Let $\omega_{\varphi_\epsilon}$ be a solution to ~\eqref{eq:gen} and let $\psi= \tr{\omega_{\varphi_\epsilon}}{\widehat\omega_\infty}$. Then $\psi$ is uniformly bounded.
\end{prop}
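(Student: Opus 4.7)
The plan is to adapt the parabolic Schwarz lemma of Song-Tian \cite{SoTi07,SoTi12} to the twisted flow \eqref{eq:gen}. The key structural input is that $\widehat\omega_\infty=\pi^*\omega_Z|_Y$ is pulled back from a smooth K\"ahler metric on the fixed smooth compact manifold $Z$, so its holomorphic bisectional curvature, viewed as a target metric, is uniformly bounded above by some constant $B>0$. I would apply Yau's Schwarz-type computation (the Chern--Lu inequality) to $\pi\colon M\to Z$ with evolving source $\omega_{\varphi_\epsilon}$ and target $\omega_Z$, obtaining, at any point where $\psi>0$,
\[
\Delta\log\psi\;\ge\;\frac{R^{i\bar j}(\widehat\omega_\infty)_{i\bar j}}{\psi}\;-\;B\psi,
\]
where $R^{i\bar j}=g_{\varphi_\epsilon}^{i\bar k}g_{\varphi_\epsilon}^{l\bar j}\Ric(\omega_{\varphi_\epsilon})_{k\bar l}$. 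Combining this with the flow equation $\partial_t\omega_{\varphi_\epsilon}=-\Ric(\omega_{\varphi_\epsilon})-\omega_{\varphi_\epsilon}+\Theta_\epsilon$, the Ricci contractions cancel and one arrives at
\[
\Bigl(\tfrac{\partial}{\partial t}-\Delta\Bigr)\log\psi\;\le\;1+B\psi-\frac{\langle\Theta_\epsilon,\widehat\omega_\infty\rangle_{\omega_{\varphi_\epsilon}}}{\psi}.
\]

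To control the twist, I would expand
\[
\ddbar\log(\|S\|_h^2+\epsilon^2)\;=\;\frac{\sqrt{-1}\nabla S\wedge\overline{\nabla S}}{\|S\|_h^2+\epsilon^2}-\frac{\d\|S\|_h^2\wedge\db\|S\|_h^2}{(\|S\|_h^2+\epsilon^2)^2}-\frac{\|S\|_h^2\,R_h}{\|S\|_h^2+\epsilon^2}.
\]
The pointwise Cauchy--Schwarz bound $\d\|S\|_h^2\wedge\db\|S\|_h^2\le\|S\|_h^2\,\sqrt{-1}\nabla S\wedge\overline{\nabla S}$ shows that the first two terms combine into a nonnegative $(1,1)$-form, and together with the curvature bound \eqref{eq:C0} this yields $\Theta_\epsilon\ge -C_0\widehat\omega_\infty$ uniformly in $\epsilon$. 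Consequently
\[
\frac{\langle\Theta_\epsilon,\widehat\omega_\infty\rangle_{\omega_{\varphi_\epsilon}}}{\psi}\;\ge\;-C_0\,\frac{\langle\widehat\omega_\infty,\widehat\omega_\infty\rangle_{\omega_{\varphi_\epsilon}}}{\psi}\;\ge\;-C_0\,\psi,
\]
and with $B':=B+C_0$ we have the cleaner inequality $(\partial_t-\Delta)\log\psi\le B'\psi+1$.

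Following Song-Tian, this is not yet enough on its own, so I would close the argument with the auxiliary function
\[
Q\;=\;\log\psi\;-\;A\bigl(\varphi_\epsilon+\dot\varphi_\epsilon+k\chi\bigr),
\]
for any $A>B'+1$. Setting $u:=\varphi_\epsilon+\dot\varphi_\epsilon+k\chi$, Section~3 provides the uniform bound $|u|\le C$, and Section~5 records $(\partial_t-\Delta)u=\psi-\kappa$, so
\[
\Bigl(\tfrac{\partial}{\partial t}-\Delta\Bigr)Q\;\le\;(B'-A)\psi+1+A\kappa\;\le\;-\psi+C_2.
\]
The maximum principle applied to $Q$ on $M\times[0,T]$ then gives $\psi\le C_2$ at any interior maximum, and the bound on $u$ upgrades this to a uniform global bound on $\log\psi$, hence on $\psi$. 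The main technical obstacle is the uniform-in-$\epsilon$ control of $\Theta_\epsilon$: the singular factor $(\|S\|_h^2+\epsilon^2)^{-1}$ can blow up near $D$ as $\epsilon\to 0$, but the positivity surviving after the Cauchy--Schwarz cancellation is exactly what makes the estimate $\Theta_\epsilon\ge -C_0\widehat\omega_\infty$ independent of $\epsilon$. The fact that $\widehat\omega_\infty$ is only semipositive is not a serious issue, since where $\psi=0$ there is nothing to prove, while elsewhere $\log\psi$ is smooth and the maximum principle applies in the standard way.
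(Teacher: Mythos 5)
Your proposal is correct and follows essentially the same route as the paper: the Song--Tian parabolic Schwarz lemma applied to $\pi$ with target $\omega_Z$, the identity $\Theta_\epsilon = (1-\beta)\frac{\epsilon^2}{\|S\|_h^2+\epsilon^2}\bigl(\frac{\sqrt{-1}\nabla S\wedge\overline{\nabla S}}{\|S\|_h^2+\epsilon^2}+R_h\bigr)$ yielding $\Theta_\epsilon\ge -C\widehat\omega_\infty$ uniformly in $\epsilon$, and the maximum principle applied to $\log\psi - Au$ with $u=\varphi_\epsilon+\dot\varphi_\epsilon+k\chi$. The only cosmetic difference is that the paper first records the evolution inequality for $\psi$ itself (Lemma 6.2) before passing to $\log\psi$, whereas you work with $\log\psi$ directly.
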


\begin{lem}\label{2}
$\psi$ satisfies the evolution inequality
\[ (\frac \d{\d t} -\Laplace) \psi \leq - \langle \Theta_\epsilon,\widehat\omega_\infty \rangle + \psi - \frac{|\nabla \psi|^2}{\psi} + C \psi^2 \]
where $C$ is an upper bound for the bisectional curvature of $\omega_Z$.
\end{lem}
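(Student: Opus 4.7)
The plan is to carry out a parabolic version of the Chern-Lu / Schwarz lemma, following the method of Song-Tian \cite{SoTi07,SoTi12}, by viewing $\pi$ as a holomorphic map $M \to Z$ into the smooth K\"ahler manifold $(Z,\omega_Z)$, so that $\widehat\omega_\infty = \pi^*\omega_Z$ and, in local coordinates, $(\widehat\omega_\infty)_{i\bar j} = (\omega_Z)_{\alpha\bar\beta}\,\de_i\pi^\alpha\,\overline{\de_j\pi^\beta}$. Differentiating $\psi = g_{\vp_\epsilon}^{i\bar j}(\widehat\omega_\infty)_{i\bar j}$ in time and using the evolution equation ~\eqref{eq:gen} for $g_{\vp_\epsilon}$ gives the clean identity
\[
\frac{\d}{\d t}\psi = \langle \Ric(\omega_{\vp_\epsilon}), \widehat\omega_\infty\rangle + \psi - \langle \Theta_\epsilon, \widehat\omega_\infty\rangle,
\]
where the inner product is taken with respect to $\omega_{\vp_\epsilon}$.

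Next, to compute $\Laplace\psi$, I would work at a point $p$ in simultaneous normal coordinates for $\omega_{\vp_\epsilon}$ at $p$ and for $\omega_Z$ at $\pi(p)$ (so that first derivatives of the metrics vanish and $R_{i\bar j k\bar l} = -\de_k\de_{\bar l}g_{i\bar j}$ for both). The standard Chern-Lu bookkeeping then yields
\[
\Laplace\psi = \langle \Ric(\omega_{\vp_\epsilon}), \widehat\omega_\infty\rangle + |\nabla\de\pi|^2 - g_{\vp_\epsilon}^{i\bar j}g_{\vp_\epsilon}^{k\bar l} R^Z_{\alpha\bar\beta\gamma\bar\delta}\,\de_i\pi^\alpha\overline{\de_j\pi^\beta}\de_k\pi^\gamma\overline{\de_l\pi^\delta},
\]
where $R^Z$ is the curvature tensor of $\omega_Z$ and $|\nabla\de\pi|^2$ is the squared norm of the Chern-covariant second derivative of $\pi$. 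The $\langle \Ric(\omega_{\vp_\epsilon}),\widehat\omega_\infty\rangle$ contributions exactly cancel on subtraction, so
\[
\Big(\frac{\d}{\d t} - \Laplace\Big)\psi = \psi - \langle \Theta_\epsilon, \widehat\omega_\infty\rangle - |\nabla\de\pi|^2 + g_{\vp_\epsilon}^{i\bar j}g_{\vp_\epsilon}^{k\bar l} R^Z_{\alpha\bar\beta\gamma\bar\delta}\,\de_i\pi^\alpha\overline{\de_j\pi^\beta}\de_k\pi^\gamma\overline{\de_l\pi^\delta}.
\]

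Two pointwise estimates then complete the proof. If $C$ bounds the bisectional curvature of $\omega_Z$ from above, then diagonalizing the curvature tensor in the chosen normal coordinates at $\pi(p)$ gives
\[
g_{\vp_\epsilon}^{i\bar j}g_{\vp_\epsilon}^{k\bar l} R^Z_{\alpha\bar\beta\gamma\bar\delta}\,\de_i\pi^\alpha\overline{\de_j\pi^\beta}\de_k\pi^\gamma\overline{\de_l\pi^\delta} \leq C\psi^2,
\]
since the contraction decomposes into a non-negative combination of bisectional curvatures of pairs of tangent vectors in $T_{\pi(p)}Z$. Meanwhile, Cauchy-Schwarz applied to $\de_k\psi = (\omega_Z)_{\alpha\bar\beta}\,\de_i\de_k\pi^\alpha\overline{\de_i\pi^\beta}$ at the normal-coordinate point yields $|\nabla\psi|^2 \leq \psi\,|\nabla\de\pi|^2$, so $-|\nabla\de\pi|^2 \leq -|\nabla\psi|^2/\psi$ wherever $\psi>0$. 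Combining both estimates gives the claimed inequality on $\{\psi>0\}$; at points where $\psi=0$ one either checks the inequality directly or applies the argument to $\psi + \delta$ and lets $\delta\to 0^+$. The main obstacle is the careful Chern-Lu bookkeeping, in particular verifying the cancellation between the Ricci contributions to $\frac{\d}{\d t}\psi$ and $\Laplace\psi$, together with the observation that although $\widehat\omega_\infty$ degenerates where $d\pi$ drops rank, the entire computation is naturally carried out on the smooth target $(Z,\omega_Z)$.
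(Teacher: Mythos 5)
Your proposal is correct and follows essentially the same route as the paper: the paper computes the same time derivative $\frac{\d}{\d t}\psi = \langle \Ric(\omega_{\varphi_\epsilon}),\widehat\omega_\infty\rangle - \langle\Theta_\epsilon,\widehat\omega_\infty\rangle + \psi$ and then simply cites the Song--Tian Chern--Lu estimate $\Laplace\psi \geq \langle\Ric(\omega_{\varphi_\epsilon}),\widehat\omega_\infty\rangle + |\nabla\psi|^2/\psi - C\psi^2$, which is exactly the computation you carry out in detail (including the Cauchy--Schwarz step $|\nabla\psi|^2 \leq \psi|\nabla\de\pi|^2$ and the bisectional curvature bound). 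Your extra care at $\{\psi=0\}$ is a harmless refinement that the paper, like Song--Tian, leaves implicit.
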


\begin{proof}[Proof of Lemma \ref{2}]
Since $\widehat\omega_\infty$ is the pullback of the metric $\omega_Z$ via the morphism $\pi: M \rightarrow Z$ we have as in Song-Tian \cite{SoTi07,SoTi12} the estimate,
\[ \Laplace \psi \geq \langle \Ric(\omega_{\varphi_\epsilon}),\widehat\omega_\infty\rangle + \frac{|\nabla \psi|^2}{\psi} - C \psi^2 ,\]
where $C$ is an upper bound for the bisectional curvature of $\omega_Z$.

For the time derivative we have
\[ \frac \d{\d t} \psi = \langle \Ric(\omega_{\varphi_\epsilon}),\widehat\omega_\infty \rangle - \langle \Theta_\epsilon,\widehat\omega_\infty \rangle + \psi ,\]
so that
\[ (\frac \d{\d t} - \Laplace) \psi \leq  - \langle \Theta_\epsilon,\widehat\omega_\infty \rangle + \psi - \frac{|\nabla \psi|^2}{\psi} + C \psi^2 ,\]
as required. \end{proof}

Before we estimate the term $\langle \Theta_\epsilon, \widehat\omega_\infty \rangle$ we establish some useful calculations.

We have,
\[ \ddbar \log (\|S\|^2_h + \epsilon^2) = - \frac{\|S\|_h^2}{(\|S\|^2_h + \epsilon^2)^2} \sqrt{-1} \nabla S\wedge\overline{\nabla S} + \frac{\sqrt{-1} \nabla S\wedge\overline{\nabla S}}{\|S\|^2_h + \epsilon^2} - \frac{\|S\|^2_h}{\|S\|^2_h + \epsilon^2} R_h \]
with $\sqrt{-1} \nabla S\wedge\overline{\nabla S}$ defined as before. It follows that
\begin{equation}\label{eq:theta}
	\Theta_\epsilon = (1-\beta)\frac{\epsilon^2}{\|S\|_h^2 + \epsilon^2}(\frac{\sqrt{-1} \nabla S\wedge\overline{\nabla S}}{\|S\|_h^2 + \epsilon^2} + R_h).
\end{equation}

Now, since 
\[ 0 < \frac{\epsilon^2}{\|S\|_h^2+\epsilon^2} \leq 1 ,\]
and $\frac{\sqrt{-1} \nabla S\wedge\overline{\nabla S}}{\|S\|_h^2 + \epsilon^2}$ is non-negative, using ~\eqref{eq:C0} we have
\[ \Theta_\epsilon \geq - C \widehat\omega_\infty .\]

\begin{proof}[Proof of Proposition \ref{1}]

From the calculation above we obtain
\begin{align*}
-\langle \Theta_\epsilon, \widehat\omega_\infty \rangle 
& \leq C |\widehat\omega_\infty|^2 \\
& \leq C \psi^2.
\end{align*}
Thus we have,
\begin{align*}
(\frac \d{\d t} - \Laplace) \log \psi & \leq - \frac{\langle \Theta_\epsilon, \widehat\omega_\infty \rangle}{\psi} + 1 + C\psi \\
& \leq 1 + C \psi,
\end{align*}
and so for $A$ sufficiently large constant independent of $\epsilon$ we have,
\[ (\frac \d{\d t} - \Laplace)(\log \psi - A u) \leq C - \psi .\]
If $\log \psi - A u$ achieves a maximum at $(x_0,t_0)$ then the maximum principle gives the uniform upper bound $\psi(x_0,t_0) \leq C$. Thus using the uniform bounds on $u$:
\[ \log \psi - A u \leq \log \psi(x_0,t_0) - A u(x_0,t_0) \leq C .\]
So we conclude that there is a uniform constant $C>0$ such that
\[ \psi \leq C, \]
completing the proof. \end{proof}

\section{The Gradient Estimate}

We use the method of Cheng-Yau \cite{ChYa75} (see also \cite{SesTi,SoTi11,Zh}) and consider $\Psi = \frac{|\nabla u|^2}{(B-u)}$ where $B>0$ is a constant taken large enough that the denominator is positive, and bounded above and away from zero. Note that because our bounds do not depend on $\epsilon$, $B$ can be chosen independent of $\epsilon$. Now we calculate,
\begin{multline}\label{eq:Psi}
	(\frac \d{\d t} - \Laplace)\Psi = \frac{1}{(B-u)} (|\nabla u|^2 - |\nabla \overline\nabla u|^2 - |\nabla \nabla u|^2 - \Theta_\epsilon(\nabla u,\overline\nabla u) +  2 \mathrm{Re} \langle \nabla\psi,\nabla u\rangle) \\
	+ \frac{|\nabla u|^2}{(B-u)}(-\kappa + \psi) - \frac{2}{(B-u)^2} \mathrm{Re}  \langle \nabla |\nabla u|^2,\nabla u\rangle - 2\frac{|\nabla u|^4}{(B-u)^3}. 
\end{multline}
By ~\eqref{eq:theta} we deduce
\begin{align*}
	-\Theta_\epsilon(\nabla u, \overline\nabla u) \leq & C \widehat\omega_\infty(\nabla u,\overline\nabla u) \\
	\leq &  C (\tr{\omega_{\varphi_\epsilon}}{\widehat\omega_\infty}) \cdot \omega_{\varphi_\epsilon}(\nabla u,\overline\nabla u) = C \psi |\nabla u|^2\\
	\leq & C |\nabla u|^2,
\end{align*}
and for small positive constant $\delta$,
\[ |\langle \nabla \psi, \nabla u \rangle | \leq \delta |\nabla \psi|^2 + C |\nabla u|^2 .\]
Noting that $\nabla (\frac{|\nabla u|^2}{B-u}) = \frac{\nabla |\nabla u|^2}{B-u} + \frac{|\nabla u|^2 \nabla u}{(B-u)^2}$, it follows
\begin{align*}
(\frac \d{\d t} - \Laplace)\Psi \leq & C |\nabla u|^2 - \frac 1{B-u}(|\nabla \overline\nabla u|^2 + |\nabla \nabla u|^2) + \delta |\nabla \psi|^2 \\
& - (2-\delta)\mathrm{Re} \langle \nabla \Psi,\frac{\nabla u}{B-u} \rangle - \delta \frac{|\nabla u|^4}{(B-u)^3} - \delta \mathrm{Re} \langle \frac{\nabla |\nabla u|^2}{B-u},\frac{\nabla u}{B-u}\rangle.
\end{align*}
Next we use the estimate from \cite{SesTi,Zh},
\begin{align*}
\delta |\langle \frac{\nabla|\nabla u|^2}{B-u}, \frac{\nabla u}{B-u} \rangle| & \leq \delta \frac{|\nabla u|^2(|\nabla \overline\nabla u| + |\nabla \nabla u|)}{(B-u)^2} \\
& \leq \sqrt{2} \delta \frac{|\nabla u|^2(|\nabla \overline\nabla u|^2 + |\nabla \nabla u|^2)^{\frac 1 2}}{(B-u)^{\frac 3 2}(B-u)^{\frac 1 2}} \\
& \leq \frac \delta 2 \frac{|\nabla u|^4}{(B-u)^3} + \delta \frac{(|\nabla \overline\nabla u|^2 + |\nabla \nabla u|^2)}{(B-u)}.
\end{align*}
Thus for $\delta$ sufficiently small we obtain
\[ (\frac \d{\d t} - \Laplace)\Psi \leq C |\nabla u|^2 + \delta |\nabla \psi|^2 - (2-\delta)\mathrm{Re} \langle \nabla \Psi,\frac{\nabla u}{B-u} \rangle - \frac \delta2 \frac {|\nabla u|^4}{(B-u)^3}. \]
Now since $\psi$ is uniformly bounded above independent of $\epsilon$, for $\delta$ sufficiently small
\begin{equation}\label{eq:psi}
	(\frac d {dt} - \Laplace)\psi \leq -\langle \Theta_\epsilon,\widehat\omega_\infty \rangle + \psi - \frac{|\nabla \psi|^2}\psi + C\psi^2 \leq - 2 \delta |\nabla \psi|^2 + C.
\end{equation}
Noting that in addition
\[ (2-\delta)|\langle \nabla \psi , \frac{\nabla u}{B-u} \rangle| \leq \delta |\nabla \psi|^2 + C |\nabla u|^2 ,\]
we arrive at,
\[ (\frac \d{\d t} -\Laplace)(\Psi + \psi) \leq C + C|\nabla u|^2 - (2-\delta)\mathrm{Re} \langle \nabla(\Psi + \psi),\frac{\nabla u}{B-u} \rangle - \frac \delta2 \frac {|\nabla u|^4}{(B-u)^3} .\]
Then at a maximal point of $(\Psi + \psi)$ we have
\[ 0 \leq C + C|\nabla u|^2 - \frac \delta2 \frac {|\nabla u|^4}{(B-u)^3} \]
for uniform constants $C,\delta>0$ and with $\frac 1{B-u}$ uniformly bounded from below away from zero. It follows that at the maximum we have
\[ |\nabla u|^2 < C ,\]
for a uniform constant $C>0$.

We readily conclude $(\Psi + \psi)$ is uniformly bounded above, and hence $|\nabla u|^2$ is uniformly bounded above.

\section{Laplacian Estimate}

Let $\Phi = \frac{B - \Laplace u - \psi}{B - u}$. Since we have an upper bound on $\Laplace u + \psi$ the constant $B>0$ can be chosen such that the numerator and denominator are both positive and the denominator is bounded above and away from zero, and again because the bounds do not depend on $\epsilon$, $B$ can be fixed independent of $\epsilon$.

Now, since $\nabla(\frac{B-\Laplace u - \psi}{B-u}) = - \frac{\nabla \Laplace u}{B-u} - \frac{\nabla \psi}{B-u} + \frac{(B-\Laplace u - \psi)\nabla u}{(B-u)^2}$,
\begin{align*}
	(\frac \d{\d t} - \Laplace)\Phi = & \frac{-1}{B-u} (\frac \d{\d t} - \Laplace) (\Laplace u + \psi) + \frac {B-\Laplace u-\psi} {B-u} (\frac \d {\d t} -\Laplace)u \\
	   & + \frac 2 {(B-u)^2} \mathrm{Re}  \langle \nabla \Laplace u, \nabla u \rangle + \frac 2 {(B-u)^2} \mathrm{Re}  \langle \nabla \psi, \nabla u \rangle - 2 \frac {B-\Laplace u - \psi}{(B-u)^3} |\nabla u|^2 \\
	=  & \frac {-1}{B-u} (\langle \Ric(\omega_{\varphi_\epsilon}) - \Theta_\epsilon, \ddbar u + \widehat\omega_\infty \rangle + \Laplace u + \psi) \\
	   & + \frac{B - \Laplace u -\psi}{B-u}(-\kappa + \psi) - 2 \mathrm{Re}  \langle \nabla \Phi,\frac{\nabla u}{B-u}\rangle.
\end{align*}
By ~\eqref{eq:id}, for small constant $0<\delta<1$ and a uniform constant depending on $\delta$,
\[ - \langle \Ric(\omega_{\varphi_\epsilon}) - \Theta_\epsilon, \ddbar u + \widehat\omega_\infty \rangle = |\ddbar u + \widehat\omega_\infty|^2 \leq (1 + \delta)|\nabla \overline\nabla u|^2 + C|\widehat\omega_\infty|^2 ,\]
and, since $|\widehat\omega_\infty|^2 \leq \psi^2 \leq C$, we have
\begin{align*}
	(\frac \d{\d t} - \Laplace)\Phi \leq & \frac{-1}{B-u} (-|\ddbar u + \widehat\omega_\infty|^2 + \Laplace u + \psi)\\
   & + C(B-\Laplace u -\psi) - 2 \mathrm{Re}  \langle \nabla \Phi, \frac{\nabla u}{B-u} \rangle \\
	\leq & \frac{(1+\delta)|\nabla \overline\nabla u|^2}{B-u} + C + C(B-\Laplace u) - 2 \mathrm{Re}  \langle \nabla \Phi, \frac{\nabla u}{B-u} \rangle.
\end{align*}
Now from ~\eqref{eq:Psi} and ~\eqref{eq:psi} (using that $|\nabla u|^2$ is now uniformly bounded) we have
\[ (\frac \d{\d t} - \Laplace)(\Psi + \psi) \leq C - \frac{|\nabla \overline\nabla u|^2}{B-u} - 2 \mathrm{Re}  \langle \nabla(\Psi + \psi),\frac{\nabla u}{B-u} \rangle .\]
Thus
\[ (\frac \d{\d t} - \Laplace)(\Phi + 2\Psi + 2\psi) \leq C + C(B-\Laplace u) - C^{-1} |\nabla \overline\nabla u|^2 - 2 \mathrm{Re}  \langle \nabla (\Phi + 2\Psi + 2\psi), \frac{\nabla u}{B-u} \rangle .\]
Lastly since
\[ |\nabla \overline\nabla u|^2 \geq \frac 1n (\Laplace u)^2 \geq \frac 1n (B-\Laplace u)^2 - \frac{B^2}n ,\]
we conclude that if $(\Phi + 2\Psi + 2\psi)$ achieves a maximum at $(x_0,t_0)\in M \times [0,T]$, then
\[ 0 \leq C + C(B - \Laplace u(x_0,t_0)) - C^{-1}(B-\Laplace u(x_0,t_0))^2 \]
for uniform constants which do not depend on $\epsilon$, which implies there is a uniform constant $C>0$ also independent of $\epsilon$ such that
\[ -\Laplace u \leq C .\]
Since $\psi$ is uniformly bounded we conclude that $\Phi = \frac{B-\Laplace u - \psi}{B-u}$ is uniformly bounded above and therefore
\begin{equation}\label{eq:finale}
	|R_\epsilon(t) - \tr{\omega_{\varphi_\epsilon}}{\Theta_\epsilon}| = |\Laplace u + \psi| \leq C
\end{equation}
for uniform constant independent of $\epsilon$. 

\section{Convergence}

Note that as $\epsilon$ goes to zero we have,
\[ \Theta_\epsilon = (1-\beta)(\ddbar\log(\|S\|^2_h + \epsilon^2) + R_h) \rightarrow 2\pi(1-\beta)[D] \]
where $[D]$ is the current of integration along the divisor $D$ and the convergence is globally on $M$ in the sense of currents, and in $C^\infty_{loc}$ on $M \setminus D$.

Now, letting $\epsilon$ tend to zero there is a subsequence $\epsilon_i$ such that $\varphi_{\epsilon_i}$ converges, in $C^\infty_{loc}$ on $M \setminus D$ and in the sense of currents globally on $M$, to the unique solution, $\varphi$, to the parabolic complex Monge-Amp\`ere equation ~\eqref{eq:CMA} (cf. \cite{Shen}).

Hence, for any compact subset $K \subseteq M \setminus D$ we have as $\epsilon_i$ tends to zero
\[ \omega_{\varphi_{\epsilon_i}} \rightarrow \omega = \overline\omega_t + \ddbar\varphi \]
where the convergence is in $C^\infty(K)$ and $\omega$ is the unique solution to the conical K\"ahler-Ricci flow ~\eqref{eq:CKRF}.

Thus we also have convergence in $C^\infty(K)$ as $\epsilon_i$ goes to zero for
\begin{align*}
	R_{\epsilon_i}(t) 
	= & \tr{\omega_{\varphi_{\epsilon_i}}}{\Ric(\omega_{\varphi_{\epsilon_i}})} \\
	= &	-\tr {\omega_{\varphi_{\epsilon_i}}} {\ddbar\log\omega_{\varphi_{\epsilon_i}^n}} \\
	\rightarrow & - \tr {\omega}{\ddbar\log\omega^n} \\
	= & R(t).
\end{align*}

Finally, since
\[ \Theta_\epsilon \rightarrow 0 \]
in $C^\infty(K)$ as $\epsilon$ goes to zero, from the uniform bound ~\eqref{eq:finale}, we conclude that we have the uniform bound on $K$,
\[ |R(t)| \leq C .\]
But since the constant is independent of $K\subseteq M\setminus D$ we conclude the uniform bound holds on all of $M\setminus D$. Thus the scalar curvature is bounded away from $D$ which completes the proof of the main theorem.

\bibliographystyle{plain}
\bibliography{Bibliography}

\end{document}